\newtheorem{theorem}{Theorem}[section]
\newtheorem{lemma}[theorem]{Lemma}
\newtheorem{corollary}[theorem]{Corollary}
\newtheorem{proposition}[theorem]{Proposition}
\newtheorem{remark}[theorem]{Remark}
\numberwithin{equation}{section}
\begin{document}

\title{On the Schr\"{o}dinger equations with isotropic and anisotropic fourth-order dispersion}
\author{{Carlos Banquet} \\
{\small Universidad de C\'{o}rdoba, Departamento de Matem\'{a}ticas y Estad\'{\i}stica}\\
{\small A.A. 354, Monter\'{\i}a, Colombia.}\\
{\small \texttt{E-mail:cbanquet@correo.unicordoba.edu.co}}\vspace{.5cm}\\
{{Elder J. Villamizar-Roa}{\thanks{Corresponding author.}}}\\
{\small Universidad Industrial de Santander, Escuela de Matem\'{a}ticas}\\
{\small A.A. 678, Bucaramanga, Colombia.} \\
{\small \texttt{E-mail:jvillami@uis.edu.co}}}
\date{\today}
\maketitle
\begin{abstract}
This paper deals with the Cauchy problem associated to the nonlinear
fourth-order Schr\"{o}dinger equation with isotropic and anisotropic
mixed dispersion. This model is given by the equation $i\partial
_{t}u+\epsilon \Delta u+\delta A u+\lambda|u|^\alpha u=0,$ $x\in
\mathbb{R}^{n},$ $t\in \mathbb{R},$ where $A$ represents either the
operator $\Delta^2$ (isotropic dispersion) or
$\sum_{i=1}^d\partial_{x_ix_ix_ix_i},\ 1\leq d<n$ (anisotropic
dispersion), and $\alpha, \epsilon, \lambda$ are given real
parameters. We obtain local and global well-posedness results in
spaces of initial data with low regularity, based on weak-$L^p$
spaces. Our analysis also includes the Biharmonic and anisotropic
Biharmonic equation $(\epsilon=0);$ in that case, we obtain the existence of
self-similar solutions due their scaling
invariance property. In a second part, we analyze the convergence of the solutions for the nonlinear
fourth-order Schr\"{o}dinger equation $i\partial _{t}u+\epsilon
\Delta u+\delta \Delta^2 u+\lambda|u|^\alpha u=0$, as $\epsilon$
goes to zero, in $H^2$-norm, to the solutions of the corresponding
Biharmonic equation $i\partial _{t}u+\delta \Delta^2
u+\lambda|u|^\alpha u=0$.
\bigskip

\textbf{AMS subject classification: }35Q55, 35A01, 35A02, 35C06.

\medskip\textbf{Keywords:} Fourth-order Schr\"{o}dinger equation; Biharmonic equation; Local and global solutions.
\end{abstract}

\pagestyle{myheadings} \markright{Fourth-order Schr\"{o}dinger equation}
\section{Introduction}
This paper is devoted to the study of the Cauchy problem associated to  the following fourth-order Schr\"{o}dinger equation 
in $\mathbb{R}^n\times \mathbb{R}:$
\begin{equation}\label{FoSch}
\left\{
\begin{array}{lc}
i\partial _{t}u+\epsilon \Delta u+\delta A u+f(|u|)u=0, & x\in \mathbb{R}^{n},\ \ t\in \mathbb{R}, \\
u(x,0)=u_{0}(x), & x\in \mathbb{R}^{n}, \\
\end{array}
\right.
\end{equation}
where the unknown $u(x,t)$ is a complex-valued function in space-time $
\mathbb{R}^n\times \mathbb{R}, n\geq 1,$  $u_0$ denotes the initial
data and  $\epsilon$, $\delta$, are real parameters. The operator $A$ is defined by
\begin{equation}\label{DefOpe}
A u=\left\{
\begin{array}{lc}
\Delta^2u=\Delta\Delta u,\ \mbox{(isotropic dispersion)}, \\
\sum\limits_{i=1}^du_{x_ix_ix_ix_i},\ 1\leq d<n,\ \mbox{(anisotropic dispersion)}. \\
\end{array}
\right.
\end{equation}
The nonlinear term is given by $f(|u|)u$ where
 $f:\mathbb{R}\rightarrow \mathbb{R}$ satisfies
\[|f(x)-f(y)|\leq C_f|x-y|(|x|^{\alpha-1}+|y|^{\alpha-1}),\]
for some $1\leq \alpha<\infty$, $f(0)=0,$ and the constant $C_f >0$
is independent of $x,y \in \mathbb{R}.$ A typical case of a function
$f$ is given by $f(x)=\vert x\vert^\alpha.$

The class of fourth-order Schr\"{o}dinger equations has been widely
used in many branches of  applied science such as nonlinear
optics, deep water wave dynamics, plasma physics, superconductivity,
quantum mechanics and so on \cite{AceAngTur, Dysthe, Hirota, Ivano,
Kar, KarSha, WenFan}. If we consider $\epsilon=0$ in (\ref{FoSch}),
the resulting equation is the fourth-order nonlinear Schr\"{o}dinger
equation
\begin{eqnarray}\label{FoNLS}
i\partial _{t}u+\delta A u+f(|u|)u=0.
\end{eqnarray}
In particular, if we take $A=\Delta^2$ in (\ref{FoNLS}) we obtain the well-known
Biharmonic equation (BNLS)
\begin{eqnarray}\label{BNLS}
i\partial _{t}u+\delta \Delta^2 u+f(|u|)u=0,
\end{eqnarray}
introduced by Karpman \cite{Kar}, and Karpman and Shagalov
\cite{KarSha} to take into account the role played by the higher
fourth-order dispersion
 terms in formation and propagation of intense laser beams in a bulk medium with Kerr nonlinearity  \cite{Ivano}.
 Historically, (\ref{BNLS}) has been extensively studied in Sobolev spaces, see for instance
\cite{Fibich,Miao,Miao1,Pau2,Pau,Pau1,Segata,Wang,ZhuYanZha} and references
therein. Fibich \textit{et al} \cite{Fibich} established sufficient conditions for the global existence of solutions to the BNLS equation, for $\delta<0$  and $\delta>0$,
with initial data in $H^2(\Omega)$ being $\Omega$ a smooth bounded
domain of $ \mathbb{R}^n.$ Global existence and scattering theory
for the defocusing BNLS, in $H^2(\mathbb{R}^n),$ was established in
Pausander \cite{Pau2, Pau}.
 Wang in \cite{Wang} showed the global existence of
solutions  and a scattering result for the  BNLS (with a
nonlinearity of the form $|u|^pu$) with
 small initial radial data in the homogeneous Sobolev space ${\dot{H}^{s_c}(\mathbb{R}^n)}$ and
 dimensions $n\geq 2.$ Here $s_c=\frac{n}{2}-\frac{4}{p}$ and $s_c>-\frac{3n-2}{2n+1}$.
 The main ingredient of \cite{Wang} is the improvement of  the Strichartz estimatives associated to the BNLS for radial initial data; see also Zhu,
 Yang and Zhang \cite {ZhuYanZha}, where some results on blow-up solitons for the BNLS
 equation are established. More recently, Guo in \cite{Guo6} analyzed the global existence of solutions in Sobolev spaces and the asymptotic behavior
for the Cauchy problem associated to BNLS equation with combined
power-type nonlinearities. Finally, we recall a recent result of
Miao \textit{et al.} \cite{Miao} about the defocusing
energy-critical nonlinear BNLS equation  $iu_t
+ \Delta^2u = -|u|^{\frac{8}{d-4}}u,$ which establishes that any
finite energy solution is global and scatters both forward and
backward in time for dimensions $d\geq 9$.

When  $\epsilon\neq 0$ and $A$ is the biharmonic operator, equation (\ref{FoSch}) corresponds to the
following nonlinear Schr\"{o}dinger equation with isotropic
mixed-dispersion (INLS):
\begin{eqnarray}\label{INLS}
i\partial _{t}u+\epsilon \Delta u+\delta\Delta^2 u+f(|u|)u=0.
\end{eqnarray}
This equation was also introduced by Karpman \cite{Kar}, and Karpman
and Shagalov \cite{KarSha}, and it has been used as a model to
investigate the role played by the higher-order dispersion terms, in
formation and propagation of solitary waves in magnetic materials
where the effective quasi-particle mass becomes infinite. From the
mathematical point of view, the INLS equation has been studied
extensively in Sobolev and Besov spaces,
 see for instance \cite{GuoCui4,GuoCui3,Guo,GuoCui5,Fibich} and some references therein.
Fibich {\it et al.} \cite{Fibich} investigated the global existence of
solutions of (\ref{INLS}) in the class $C(\mathbb{R};
H^2(\mathbb{R}^n))$ by using the conservation laws. Moreover, the dynamic of the solutions and numerical simulations were also analyzed. These results were improved by Guo and Cui in
\cite{GuoCui4}. Local well-posedness of the Cauchy problem
(\ref{INLS}) in Sobolev spaces $H^s(\mathbb{R}^n),$ with $f(u)=\vert
u \vert^\alpha,$ $\frac{\alpha}{2}\geq\frac{4}{n},$
$s>s_0:=\frac{n}{2}-\frac{4}{\alpha},$ was obtained by Cui and
Guo in \cite{GuoCui5}. Additionally,  by using such
local result and the conservation laws, a global well-posedness results
in $H^2(\mathbb{R}^n)$ was also established. In \cite{GuoCui3} the authors proved some
results of local and global well-posedness on Besov spaces for
 dimensions $1\leq n\leq 4$; more exactly, the authors proved that
 the Cauchy problem associated to (\ref{INLS}), with $f(u)=\vert u \vert^\alpha,$ is local well possed in
 $C([-T,T];\dot{B}^{s_\alpha}_{2,q}(\mathbb{R}^n))$ and
 $C([-T,T];B_{2,q}^s(\mathbb{R}^n))$ for some $T>0,$ where
 $s_\alpha=\frac{n}{2}-\frac{4}{\alpha},$ $s>s_\alpha,$ $1\leq q \leq \infty.$
With respect to the global well-posedness in Sobolev space,  Guo in \cite{Guo}, considering
$f(u)=\vert u \vert^{2m},$ and using the I-method, proved the existence of global solutions in $H^s(\mathbb{R}^n)$ for $s>1+\tfrac{mn-9+\sqrt{(4m-mn+7)^2+16}}{4m},$ $4<mn<4m+2.$

Another important model considerated  in (\ref{FoSch}) is given by the case of anisotropic dispersion (ANLS), that is,
\begin{equation}\label{ANLS}
\begin{array}{lc}
i\partial _{t}u+\epsilon \Delta u+\delta\sum\limits_{i=1}^du_{x_ix_ix_ix_i}
+f(|u|)u=0.
\end{array}
\end{equation}
This model appears in the propagation of ultrashort laser pulses in a planar waveguide medium with
anomalous time-dispersion, and the propagation of solitons in fiber arrays (see Wen and Fan \cite{WenFan}
and Acevedes {\it et al.} \cite{AceAngTur}).  Results of local and global well-posedness for initial data in $H^s$-spaces were given in \cite{GuoCui5} and \cite{ZhaGuoSheWei}

In this paper we are interested in the local and global well-posedness of the general fourth-order Schr\"{o}dinger equation outside the framework of finite energy $H^s$-spaces. More exactly, we analyze the existence of local and global solutions for the Cauchy problem (\ref{FoSch}) in a new class of initial data based on 
weak-$L^p$ spaces (see definition below). Weak-$L^p$ spaces, also denoted by $L^{(p,\infty)},$ are natural extensions of Lebesgue
spaces $L^p$, in view of the Chebyshev inequality \cite{BL}. They contain singular functions with infinite $L^2$-mass such as homogeneous functions of degree $-\frac{n}{p}.$ Making a comparison between weak-$L^p$ spaces and $H^{s,l}$-spaces, it is known that the continuous
inclusion $H^{s,l}(\mathbb{R}^{n})\subset L^{(p,\infty)}(\mathbb{R}^{n})$
holds true for $s\geq0$ and $\frac{1}{p}\geq\frac{1}{l}-\frac{s}{n}$, and
$H^{s,l}$-spaces do not contain any weak-$L^{p}$ spaces if $s\in\mathbb{R},$
$1\leq l\leq2$ and $l\leq p$. In particular, $L^{(p,\infty)}(\mathbb{R}%
^{n})\not \subset H^{s,2}(\mathbb{R}^{n})=H^{s}(\mathbb{R}^{n})$ for all
$s\in\mathbb{R},$ when $p\geq2.$ However, $L^{(p,\infty)}\subset L^2_{loc}$ for $p>2.$ On the other hand, comparing equations (\ref{FoNLS}) with (\ref{INLS}) and (\ref{ANLS}),
we observe that equation (\ref{FoNLS}), with $f(\vert u \vert)=\vert
u\vert^\alpha$, unlike equations (\ref{INLS}) and (\ref{ANLS}), is invariant under the group of transformations
$u(x,t)\rightarrow u_\lambda(x,t),$ where
$u_\lambda(x,t)=\lambda^{\frac{4}{\alpha}}u(\lambda x,\lambda^4t)$,
$\lambda>0.$ Solutions which are invariant under the transformation
$u\rightarrow u_\lambda$ are called self-similar solutions. 
As pointed out in Dudley {\it et al.} \cite{Dudley} (see also \cite{LucEld1}), self-similarity type properties appear in a wide range of physical situations and they reproduce the structure of a phenomena in different spatio-temporal scales. A universal law governing self-similar scale invariance reveals the existence of internal symmetry and structure in a system. Thus, self-similar solutions naturally provide such a law for system (\ref{FoNLS}). In ultrafast nonlinear optics, self-similar dynamics have attracted a lot of interest and constitute an increasing field of research (see  \cite{Dudley} and references therein). For instance, in Fermann {\it et al.} \cite{Fermann} was showed that a type of self-similar parabolic pulse is an asymptotic solution to a nonlinear Schr\"{o}dinger equation with gain. In
order to obtain self-similar solutions we need to consider a norm
$\Vert \cdot\Vert$ defined on a space of initial data $u_0,$ which
is invariant with respect to the group of transformations
$u\rightarrow u_\lambda,$ that is, $\Vert {u_0}_\lambda\Vert=\Vert
{u_0}\Vert$ for all $\lambda>0;$ therefore $u_0$ must be a
homogeneous function of degree $-\frac{4}{\alpha}.$ However, $H^s$-spaces are not well adapted for studying this kind of solutions.
This fact represents an additional motivation to study the existence of global solutions of
Cauchy problem associated to (\ref{FoNLS}) with initial data outside  $H^s$-spaces, by using norms based on $L^{(p,\infty)}$. As consequence, the existence of forward
self-similar solutions for (\ref{FoNLS}) is obtained by assuming
$u_0$ a sufficiently small homogeneous function of degree
$-\frac{4}{\lambda}$. Because equation appearing in (\ref{FoSch}) does not
verify any scaling symmetries (in particular equations (\ref{INLS}) and (\ref{ANLS})), it is not likely to possess self-similar solutions. However, by using time decay estimates for the
respective fourth-order Schr\"{o}dinger group in weak-$L^p$ spaces,
we are able to obtain a result of existence of global solutions for
the Cauchy problem (\ref{FoSch}) in a class of function spaces
generated by the scaling of the Biharmonic equation (\ref{BNLS})
with $f(\vert u \vert)=\vert u\vert^\alpha$. In relation to the
existence of local in time solutions for (\ref{FoSch}) and in
particular, the Cauchy problem associated to the equation
(\ref{FoNLS}), we will prove a result of existence and uniqueness
for a large class of singular initial data, which includes
homogeneous functions of degree $-\frac n p$ for adequate values of $p.$
 The solutions obtained here can be physically interesting because, as was said, elements of $L^{(p,\infty)}$ have local finite $L^2$-mass (that is, they belong to $L^2_{loc}$), for $p > 2$.  In addition, for initial data in $H^s(\mathbb{R}^n)$, the corresponding solution belongs to $H^s(\mathbb{R}^n),$ which shows that the constructed data-solution map in $L^{(p,\infty)}$ recovers the $H^s$-regularity and it is compatible with the $H^s$-theory. 

It is worthwhile to remark that the existence of local and global
solutions for dispersive equations with initial data outside the
context of finite $L^2$-mass, such as weak-$L^r$ spaces, has been
analyzed for the classical Schr\"{o}dinger equation, coupled
Schr\"{o}dinger equations, Davey-Stewartson system, which are
models characterized by having scaling relation (cf. \cite{WC, LucEld1, LucEldPab,  EldJean}). Existence of solutions in the
framework of weak-$L^r$ spaces for models which have no scaling
relation, have been explored in the case of Boussinesq and
Schor\"{o}dinger-Boussinesq system in \cite{CaLuEld, Fer-Bousq} and more recently, in the context of Klein-Gordon-Schr\"{o}dinger system \cite{CaLuEld1}.

In order to state our results, we start by establishing the definition of mild solution for the Cauchy problem (\ref{FoSch}). A mild solution for (\ref{FoSch}) is a function  $u$ satisfying the following integral equation
\begin{equation}\label{IntEqu}
u(x,t)=G_{\epsilon,\delta}(t)u_0(x)+i\int_0^tG_{\epsilon,\delta}(t-\tau)f(|u(x,\tau)|)u(x,\tau)d\tau,
\end{equation}
where  $G_{\epsilon,\delta}(t)$ is the free group associated to the
linear Fourth-order Schr\"{o}dinger equation, that is,
\begin{equation}\label{DefGe}
G_{\epsilon,\delta}(t)\varphi=\left\{
\begin{array}{lll}
J_{\epsilon,\delta}(\cdot,t)\ast\varphi, \ \ \mbox{if}\ A=\Delta^2,\\
I_{\epsilon,\delta}(\cdot,t)\ast\varphi,\ \ \mbox{if}\
A=\sum\limits_{i=1}^d\partial_{x_ix_ix_ix_i},
\end{array}
\right.
\end{equation}
 for all $\varphi\in \mathcal{S}'(\mathbb{R}^n), $ where
\begin{align*}
J_{\epsilon,\delta}(x,t)&=(2\pi)^{-n}\int_{\mathbb{R}^n}e^{ix \xi -it\left(\epsilon|\xi|^2-\delta|\xi|^4\right)}d\xi\\
I_{\epsilon,\delta}(x,t)&=\left((2\pi)^{-d}\prod_{j=1}^d
\int_{\mathbb{R}}e^{ix_j\xi_j-it(\epsilon\xi_j^2-\delta\xi_j^4)}d\xi_j\right)\times \left((2\pi)^{-(n-d)}\prod_{j=d+1}^n\int_{\mathbb{R}}e^{ix_j\xi_j-it\epsilon\xi_j^2}d\xi_j\right)\\
&\equiv I^1_{\epsilon,\delta}(x,t)I^2_{\epsilon,\delta}(x,t).
\end{align*}

Before to precise our results, briefly we recall some notation and
facts about Lorentz spaces, see Bergh and L\"{o}fstr\"{o}m \cite{BL}, which will be our scenario to
establish existence results. Lorentz spaces $L^{(p,d)}$ are
defined as the set of measurable function $g$ on $\mathbb{R}^{n}$
such that the quantity
\begin{equation*}
\Vert g\Vert _{(p,d)}=\left\{
\begin{array}{lll}
\left( \frac{p}{d}\displaystyle\int_{0}^{\infty }\left[
t^{\frac{1}{p}}g^{\ast \ast }(t)\right] ^{d}\frac{dt}{t}\right)
^{\frac{1}{d}}, &
\mbox{if} & 1<p<\infty \mbox{, }1\leq d<\infty, \\
\sup_{t>0}t^{\frac{1}{p}}g^{\ast \ast }(t), & \mbox{if} & 1<p\leq
\infty \mbox{, }d=\infty ,
\end{array}
\right.
\end{equation*}
is finite. Here $g^{\ast \ast }(t)=\frac{1}{t}\int_{0}^{t}g^{\ast }(s)\mbox{ }ds$ and
\begin{equation*}
 g^{\ast}(t)=\inf \{s>0:\mu \left( \{{x}\in \Omega :|g({x})|>s\}\right) \leq t\},\ \ t>0,
\end{equation*}
with $\mu $ denoting the Lebesgue measure. In particular,
$L^{p}(\Omega)=L^{(p,p)}(\Omega )$ and, when $d=\infty ,\
L^{(p,\infty )}(\Omega )$ are called weak-$L^{p}$ spaces.
Furthermore, $L^{(p,d_{1})}\subset L^{p}\subset L^{(p,d_{2})}\subset
L^{(p,\infty )}$ for $1\leq d_{1}\leq p\leq d_{2}\leq \infty $. In
particular, weak-$L^{p}$ spaces contain singular functions with
infinite $L^{2}$-mass such as homogeneous functions of degree
$-\frac n p$. Comparing to $H^{s,l}$-spaces, the continuous inclusion
$H^{s,l}(\mathbb{R} ^{n})\subset L^{(p,\infty )}(\mathbb{R}^{n})$
holds true for $s\geq 0$ and $ \frac{1}{l}-\frac{s}{n}\leq
\frac{1}{p}$, and $H^{s,l}$-spaces do not contain any weak-$L^{p}$
spaces if $s\in \mathbb{R},$ $1\leq l\leq 2$ and $ l\leq p$. In
particular, $L^{(p,\infty )}(\mathbb{R}^{n})\not\subset
H^{s,2}(\mathbb{R}^{n})=H^{s}(\mathbb{R}^{n})$ for all $s\in
\mathbb{R},$ when $ p\geq 2.$  Finally, a helpful fact about Lorentz
spaces is the validity of the H\"{o}lder inequality, which reads
\begin{equation*}
\Vert gh\Vert _{(r,s)}\leq C(r)\Vert g\Vert _{(p_{1},d_{1})}\Vert
h\Vert_{(p_{2},d_{2})},
\end{equation*}
for $1<p_{1}\leq \infty ,$ $1<p_{2},$ $r<\infty ,$
$\frac{1}{p_{1}}+\frac{1}{p_{2}}<1,$
$\frac{1}{r}=\frac{1}{p_{1}}+\frac{1}{p_{2}}$, and $s\geq 1$ satisfies $\frac{1}{d_{1}}+\frac{1}{d_{2}}\geq \frac{1}{s}$.\\

In this paper we obtain new results of local and global existence on Schr\"{o}dinger equations with isotropic
and anisotropic fourth-order dispersion. First, we prove the existence of local-in-time solutions to the
integral equation (\ref{IntEqu}) (see Theorem
\ref{LocalTheo}).
For the existence of local solutions, fixed $0<T<\infty,$ we
consider the space $\mathcal{G}_{\beta}^T$ of Bochner measurable
functions $u:(-T,T)\rightarrow L^{(p(\alpha+1),\infty)}$ such that
\begin{equation*}
\|u\|_{\mathcal{G}_{\beta}^T}=\sup_{-T<t<T}|t|^{\beta}\|u(t)\|_{(p(\alpha+1),\infty)},
\end{equation*}
where
\begin{equation}\label{Defbeta}
\beta=\left\{
\begin{aligned}
&\frac{n\alpha}{4p(\alpha+1)},\ \mbox{if}\ A=\Delta^2,\\
&\frac{(2n-d)\alpha}{4p(\alpha+1)},\ \mbox{if}\
A=\sum_{i=1}^d\partial_{x_ix_ix_ix_i},
\end{aligned}
\right.
\end{equation}
and $p$ is such that the pair $(\frac{1}{p},\frac{1}{p(\alpha+1)})$
belongs to  the set $\Xi_0\setminus\partial \Xi_0$ where $\Xi_0$ is
the quadrilateral $R_0P_0BQ_0$, with $ B=(1,0), \ \ P_{0}=(2/3,0),\
\ Q_0=(1,1/3) \ \ \text{and}\ \ R_0=( 1/2,1/2). $ The exponent
$\beta$ in  (\ref{Defbeta}), and the restriction of $p,$ correspond
to the time decay of the group $G_{\epsilon, \delta}(t)$ on Lorentz spaces
(see Proposition \ref{LinEstLoc} below). The initial data is such
that $\|G_{\epsilon,\delta}(t)u_0\|_{\mathcal{G}_{\beta}^T}$ is
finite. As a consequence, some results of local existence in Sobolev spaces can be recovered (see Remark \ref{rem1}).\\

Second, we analyze the existence of
global-in-time solutions (see Theorem \ref{GlobalTheo}). For that we
define the space $\mathcal{G}^{\infty}_{\sigma}$ as the set of
Bochner measurable functions $u:(-\infty ,\infty )\rightarrow
L^{(\alpha+2,\infty )}$ such that
\[
\| u\|_{\mathcal{G}^{\infty}_{\sigma}}=\sup_{-\infty <t<\infty}|t|^\sigma\|
u(t)\| _{(\alpha+2,\infty )}<\infty,
\]
where $\sigma$ is  given by
\begin{equation}\label{Defsigma}
\sigma=\left\{
\begin{aligned}
&\frac{1}{\alpha}-\frac{n}{4(\alpha+2)},\ \mbox{if}\ A=\Delta^2,\\
&\frac{1}{\alpha}-\frac{2n-d}{4(\alpha+2)},\ \mbox{if}\
A=\sum_{i=1}^d\partial_{x_ix_ix_ix_i}.
\end{aligned}
\right.
\end{equation}
Observe that the value
$\sigma=\frac{1}{\alpha}-\frac{n}{4(\alpha+2)}$ in (\ref{Defsigma})
is the unique one such that the norm $\|
u\|_{\mathcal{G}^{\infty}_{\sigma}}$ becomes invariant by the
scaling of  Biharmonic equation with $f(u)=\vert u \vert^\alpha.$ In order
to obtain existence of global solutions,  we consider the following
class of initial data
\begin{eqnarray}\label{initial_data}
\mathcal{D}_\sigma\equiv\{\varphi\in \mathcal{S}'(\mathbb{R}^n):
\sup_{-\infty <t<\infty}t^\sigma\Vert
G_{\epsilon,\delta}(t)\varphi\Vert_{(\alpha+2,\infty)}<\infty\}.
\end{eqnarray}
As consequence, if we consider the Biharmonic or anisotropic Biharmonic equation, i.e., $\epsilon=0$ in (\ref{FoSch}), we obtain the
existence of self-similar solutions by assuming $u_0$ a sufficiently
small homogeneous function of degree $-\frac{4}{\alpha}$ (see Corollary \ref{self}).\\

As it was said, formally, when we drop the second order  dispersion
term in $i\partial _{t}u+\epsilon \Delta u+\delta \Delta^2
u+f(|u|)u=0,$ i.e.,  taking $\epsilon=0$, we obtain the Biharmonic
equation $i\partial _{t}u+\delta \Delta^2 u+f(|u|)u=0.$
 However, to the best of our
knowledge, the vanishing second order dispersion limit has not been
addressed. We observe that the analysis of vanishing dispersion
limits can be seen as an interesting issue in dispersive PDE theory,
because it permits to describe qualitative properties between
different models. We recall, for instance, that in fluid mechanics,
the vanishing viscosity limit of the incompressible Navier-Stokes
equations is a classical issue \cite{FPV,Iftimie}. This is the
motivation of the second aim of this paper. We study the convergence as $\epsilon$
goes to zero, in the $H^2$-norm, of the solution of Cauchy problem
(\ref{FoSch}), with $A=\Delta^2,$ to the corresponding Cauchy
problem associated to biharmonic equation (\ref{BNLS}). In the the anisotropic case, i.e.,
$A=\sum_{i=1}^d\partial_{x_ix_ix_ix_i},$  the vanishing second order dispersion limit is not clear, because we are not able to bound $\Vert \nabla u_\epsilon\Vert_{L^2}$  or $ \Vert u_\epsilon\Vert^2_{H^1}+\sum_{i=1}^d\Vert  u_{\epsilon_{x_ix_i}}\Vert^2_{L^2}$ in terms of the conserved quantities associated  to (\ref{FoSch}) and  independently of $\epsilon.$  This is an interesting question to be considered as future research.

The rest of this paper is organized as follows. In Section 2 we
establish some linear and nonlinear estimates which are fundamental
in order to obtain our results of local and global mild solutions. 
In Section 3 we state and prove our results of local and global
solutions. Finally, in Section 4, we give a result about vanishing second order
dispersion limit.
\section{Linear and nonlinear estimates}
In this section we
establish some linear and nonlinear estimates which are fundamental
in order to obtain our results of local and global mild solutions.
We start by rewriting Theorem 2, Section 3, of Cui \cite{Cui1} for the
case $n = 1$ and Theorem 2, Section 3, of Cui \cite{Cui2}
for the case $n \geq 2$ (see also Lemma 2.1 in Guo and Cui
\cite{GuoCui, GuoCui2}). For this purpose we denote $\Xi_0$ the quadrilateral
$R_0P_0BQ_0$ in the $(1/p,1/q)$ plane, where
\begin{equation*}
B=(1,0), \ \ P_{0}=(2/3,0),\ \ Q_0=(1,1/3) \ \ \text{and}\ \ R_0=( 1/2,1/2).
\end{equation*}
$\Xi_0$ comprises the apices $B, R_0$ and all the edges $BP_0,$
$BQ_0,$ $P_0R_0$ and $Q_0R_0,$ but does not comprise the apices
$P_0$ and $Q_0.$

\begin{proposition}\label{LemmaCui} Given  $T>0$ and a pair of positive numbers $(p,q)$ satisfying $(1/p,1/q)\in\Xi_0,$ there exists a
 constant $C=C(T,p,q)>0$ such that for any $\varphi\in L^p(\mathbb{R}^n)$ and $-T\leq t\leq T$ it holds
\begin{eqnarray*}
\Vert G_{\epsilon,\delta}(t)\varphi\Vert _{L^q}\leq C\vert
t\vert^{-b_l}\Vert \varphi \Vert _{L^{p}},
\end{eqnarray*}
where
\begin{equation}\label{e1}
b_l=\left\{
\begin{array}{lll}
\frac{n}{4}\left(\frac{1}{p}-\frac{1}{q}\right),\ \mbox{if}\ A=\Delta^2,\\
\frac{2n-d}{4}\left(\frac{1}{p}-\frac{1}{q}\right),\ \mbox{if}\
A=\sum\limits_{i=1}^d\partial_{x_ix_ix_ix_i}.
\end{array}
\right.
\end{equation}

 Moreover, if $\epsilon=0$ the above estimate holds for all
$t\neq 0.$
\end{proposition}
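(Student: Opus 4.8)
The statement is, modulo the presence of the anisotropic operator, a reformulation of the dispersive decay estimates of Cui \cite{Cui1,Cui2} (see also Guo and Cui \cite{GuoCui,GuoCui2}); the plan is to reduce it to quantitative bounds for the convolution kernels in Lorentz spaces and then to fill out the quadrilateral $\Xi_0$ by interpolation. Since the Fourier symbol of $G_{\epsilon,\delta}(t)$ is unimodular, $G_{\epsilon,\delta}(t)$ is an isometry of $L^{2}(\mathbb{R}^{n})$ for every $t$, which gives the estimate at the apex $R_0=(1/2,1/2)$ with $b_l=0$. The other ingredient is a bound $\|J_{\epsilon,\delta}(\cdot,t)\|_{L^{r}}\le C(r)\,|t|^{-\frac{n}{4}(1-\frac1r)}$ for $r\in(3,\infty]$, together with the endpoint $J_{\epsilon,\delta}(\cdot,t)\in L^{(3,\infty)}$ carrying the same $|t|$-power: by Young's inequality (strong for $r>3$, of weak type in Lorentz spaces at $r=3$, always with $\frac1q+1=\frac1r+\frac1p$) this yields the asserted estimate for $(1/p,1/q)$ in the triangle $BP_0Q_0$ minus the vertices $P_0,Q_0$ --- the threshold $r>3$ being exactly what forces the exclusion of those two apices. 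Interpolating this family with the $L^{2}$-isometry at $R_0$ then produces the bound on the whole convex hull $\Xi_0=R_0P_0BQ_0$, the exponent $b_l=\frac{n}{4}(\frac1p-\frac1q)$ being the affine interpolant of the decay rates $\frac n4$ (at $B$), $\frac n6$ (at $P_0$ and $Q_0$) and $0$ (at $R_0$).

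\textbf{Isotropic kernel.} The missing ingredient is thus the kernel bound, which is where the work lies. Rescaling $\xi\mapsto|t|^{-1/4}\eta$ turns the phase of $J_{\epsilon,\delta}(x,t)$ into $|t|^{-1/4}x\cdot\eta-(\epsilon|t|^{1/2}|\eta|^{2}-\delta|\eta|^{4})$; on $[-T,T]$ the coefficient $\epsilon|t|^{1/2}$ stays bounded, so the oscillatory integral is governed by a stationary-phase analysis of the phase $\delta|\eta|^{4}$. Its only critical point, $\eta=0$, is degenerate but of finite type, whereas for $x\neq0$ the critical point $\eta_{c}$ determined by $x=4|\eta_{c}|^{2}\eta_{c}$ is nondegenerate with Hessian determinant comparable to $|\eta_{c}|^{2n}\sim|x|^{2n/3}$; this produces $|J_{\epsilon,\delta}(x,t)|\le C\,|t|^{-n/4}\bigl(1+|t|^{-1/4}|x|\bigr)^{-n/3}$, which gives the $L^{r}$- and $L^{(3,\infty)}$-bounds used above (the spatial decay rate $n/3$ is precisely what makes $3$ the critical exponent). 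When $\epsilon=0$ the symbol is exactly homogeneous, so $J_{0,\delta}(x,t)=|t|^{-n/4}J_{0,\delta}(|t|^{-1/4}x,\operatorname{sgn}t)$ and, since nothing above used $|t|\le T$ except to bound the now-absent term $\epsilon|t|^{1/2}$, the estimate holds for all $t\neq0$.

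\textbf{Anisotropic kernel and the main obstacle.} Writing $x=(x',x'')\in\mathbb{R}^{d}\times\mathbb{R}^{n-d}$, the kernel factorizes, $I_{\epsilon,\delta}(x,t)=\bigl(\prod_{j=1}^{d}K(x_{j},t)\bigr)\bigl(\prod_{j=d+1}^{n}S(x_{j},t)\bigr)$, where $K$ is the one-dimensional kernel of the previous step and $S$ the one-dimensional Schr\"{o}dinger kernel; hence $\|I^{1}_{\epsilon,\delta}(\cdot,t)\|_{L^{r}(\mathbb{R}^{d})}\le C\,|t|^{-\frac{d}{4}(1-\frac1r)}$ for $r\in(3,\infty]$, while $|I^{2}_{\epsilon,\delta}(\cdot,t)|=c\,|t|^{-(n-d)/2}$ pointwise and $I^{2}_{\epsilon,\delta}(\cdot,t)$ is an $L^{2}(\mathbb{R}^{n-d})$-isometry. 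One then combines Young's inequality in the $x'$-variables with the classical dispersive estimates of $e^{i\epsilon t\Delta_{x''}}$ in the $x''$-variables, and interpolates the resulting mixed-norm bounds with the global $L^{1}\to L^{\infty}$ estimate (the two exponents adding up, $\frac{d}{4}+\frac{n-d}{2}=\frac{2n-d}{4}$) and with the global $L^{2}$-isometry, reaching $\Xi_0$ with $b_l=\frac{2n-d}{4}(\frac1p-\frac1q)$; for $n=1$ only the isotropic operator occurs, and the whole argument is carried out in \cite{Cui1,Cui2,GuoCui,GuoCui2}. The hard part here is exactly this last combination: since the Schr\"{o}dinger factor $I^{2}$ lies in no $L^{r}$ with $r<\infty$, every gain away from the duality line $\frac1p+\frac1q=1$ must be produced by the fourth-order directions alone, so the two groups have to be patched together through a careful mixed-norm interpolation rather than a single application of Young's inequality; the remaining analytic input --- the stationary-phase estimate of the isotropic step --- is standard.
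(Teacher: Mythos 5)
The paper does not actually prove this proposition: it is quoted verbatim from Cui \cite{Cui1,Cui2} and Guo--Cui \cite{GuoCui,GuoCui2}, so what you are offering is a reconstruction, and it has two genuine gaps. First, in the isotropic case your ingredients are only the \emph{size} of the kernel, Young/O'Neil, and interpolation with the $L^2$ isometry at $R_0$. The set of exponent pairs you can reach this way is the convex hull of $\{R_0\}$ with the triangle $BP_0Q_0$ minus its two vertices, and a short convexity check (the ray from $R_0$ through an interior point of the segment $R_0P_0$ meets the triangle only at $P_0$) shows this hull misses the two open edges $P_0R_0$ and $Q_0R_0$ --- which the statement includes in $\Xi_0$. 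This is not a bookkeeping issue: on those edges the estimate cannot follow from the pointwise bound at all. For instance at $(1/p,1/q)=(7/12,1/4)\in P_0R_0$, O'Neil would require the majorant $(1+|t|^{-1/4}|x|)^{-n/3}$ to lie in weak-$L^{3/2}$, and for a positive radially decreasing kernel membership in that Lorentz class is essentially necessary for $L^{12/7}\to L^4$ convolution boundedness; the decay $|x|^{-n/3}$ is too slow. The edge estimates in Cui's theorems come from the \emph{oscillation} $e^{ic|t|^{-1/3}|x|^{4/3}}$ of the kernel (Miyachi/Sj\"olin-type arguments), which your proof never uses. (This gap is harmless for the rest of the paper, which only invokes $\Xi_0\setminus\partial\Xi_0$, but it is a gap for the proposition as stated. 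Also, your uniform-in-$|t|\le T$ pointwise bound for $\epsilon\neq0$ is asserted too quickly: the rescaled phase $\epsilon|t|^{1/2}|\eta|^2-\delta|\eta|^4$ has degenerate critical points where the quadratic and quartic terms balance, and controlling these uniformly is precisely the content of Cui's pointwise estimates.)

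Second, the anisotropic half is only a sketch, and the step you yourself flag as ``the hard part'' is in fact an obstruction to the route you propose. The group factorizes as a tensor product, and by testing on products $g(x')h(x'')$ any bound $\Vert G_{\epsilon,\delta}(t)\varphi\Vert_{L^q(\mathbb{R}^n)}\le C\Vert\varphi\Vert_{L^p(\mathbb{R}^n)}$ forces $e^{i\epsilon t\Delta_{x''}}:L^p(\mathbb{R}^{n-d})\to L^q(\mathbb{R}^{n-d})$ to be bounded; since that operator is a chirped Fourier transform, this holds only when $q=p'$ (or $p=q=2$). Hence no combination of Young's inequality in $x'$, the dispersive estimate in $x''$, and interpolation with $L^1\to L^\infty$ and $L^2\to L^2$ can produce the claimed estimate off the duality line $\frac1p+\frac1q=1$, so your ``careful mixed-norm interpolation'' cannot be filled in as described; whatever argument covers the anisotropic case must be of a different nature (e.g.\ mixed anisotropic norms, or a restriction of the exponent region), and you have not supplied it. For the same reason your closing claim that the $\epsilon=0$ case extends to all $t\neq0$ cannot be justified this way in the anisotropic setting, where $\epsilon=0$ leaves no dispersion at all in the $x''$ directions.
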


The last inequality is not convenient to obtain a result of global
well-posedness  because the constant $C$ depends on $T.$ In order to
overcome this problem we establish a different result which follows
from a standard scaling argument.
\begin{lemma}\label{LemmaNoT}
If $\frac 1 p+\frac 1{p'}=1$ with $p\in[1,2],$ then there exists a constant $C$
independent of $\epsilon,\delta$ and $t$ such that
\[\|G_{\epsilon,\delta}(t)\varphi\|_{L^{p'}}\leq C |t|^{-b_g}\|\varphi\|_{L^p},\ \varphi\in L^p(\mathbb{R}^n),\]
for all $t\neq 0,$ where
\begin{equation}\label{e2}
b_g=\left\{
\begin{array}{lll}
\frac{n}{4}\left(\frac{2}{p}-1\right),\ \mbox{if}\ A=\Delta^2,\\
\frac{2n-d}{4}\left(\frac{2}{p}-1\right),\ \mbox{if}\
A=\sum\limits_{i=1}^d\partial_{x_ix_ix_ix_i}.
\end{array}
\right.
\end{equation}
\end{lemma}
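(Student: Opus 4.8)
The plan is to produce the estimate by Riesz--Thorin interpolation between the two endpoints $p=1$ and $p=2$. The case $p=2$ is free: $G_{\epsilon,\delta}(t)$ is a Fourier multiplier with unimodular symbol (namely $e^{-it(\epsilon|\xi|^2-\delta|\xi|^4)}$ in the isotropic case and $e^{-it(\epsilon|\xi|^2-\delta\sum_{i\le d}\xi_i^4)}$ in the anisotropic one), hence a unitary operator on $L^2(\mathbb{R}^n)$, so $\|G_{\epsilon,\delta}(t)\varphi\|_{L^2}=\|\varphi\|_{L^2}$, which is exactly the asserted inequality with $b_g=0$. Everything therefore reduces to the dual endpoint
\begin{equation*}
\|G_{\epsilon,\delta}(t)\varphi\|_{L^\infty}\le C\,|t|^{-b_1}\|\varphi\|_{L^1},\qquad t\ne 0,
\end{equation*}
with $b_1=n/4$ if $A=\Delta^2$ and $b_1=(2n-d)/4$ if $A=\sum_{i=1}^d\partial_{x_ix_ix_ix_i}$, and with $C$ independent of $\epsilon,\delta,t$. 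Granting this, Riesz--Thorin applied to the fixed operator $G_{\epsilon,\delta}(t)$ between $(L^1,L^\infty)$ and $(L^2,L^2)$ gives $L^{p_\theta}\to L^{q_\theta}$ with $1/p_\theta=1-\theta/2$ and $1/q_\theta=\theta/2$ (so $q_\theta=p_\theta'$) and operator norm at most $(C|t|^{-b_1})^{1-\theta}$; since $1/p=1-\theta/2$ forces $1-\theta=2/p-1$, this bound is $C\,|t|^{-b_1(2/p-1)}=C\,|t|^{-b_g}$, with a constant that may be taken uniform over $p\in[1,2]$. Thus the whole matter is the $L^1\to L^\infty$ dispersive estimate with the sharp exponent and a constant free of $\epsilon,\delta,t$.

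In the isotropic case I would deduce this endpoint from Proposition \ref{LemmaCui} by a dilation. With $D_\lambda\varphi(x)=\varphi(\lambda x)$, the change of variable $\xi\mapsto\lambda\eta$ turns $e^{-it(\epsilon|\xi|^2-\delta|\xi|^4)}$ into $e^{-i\lambda^4 t(\epsilon\lambda^{-2}|\eta|^2-\delta|\eta|^4)}$, which at the level of operators reads $G_{\epsilon,\delta}(t)\,D_\lambda=D_\lambda\,G_{\epsilon\lambda^{-2},\delta}(\lambda^4 t)$. Choosing $\lambda=|t|^{-1/4}$ gives $G_{\epsilon,\delta}(t)=D_{|t|^{-1/4}}\,G_{\epsilon|t|^{1/2},\delta}(\operatorname{sgn}t)\,D_{|t|^{1/4}}$. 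Since $D_\mu$ is an isometry of $L^\infty$ and $\|D_{|t|^{1/4}}\varphi\|_{L^1}=|t|^{-n/4}\|\varphi\|_{L^1}$, one obtains $\|G_{\epsilon,\delta}(t)\varphi\|_{L^\infty}=\|G_{\epsilon|t|^{1/2},\delta}(\operatorname{sgn}t)(D_{|t|^{1/4}}\varphi)\|_{L^\infty}\le C\,|t|^{-n/4}\|\varphi\|_{L^1}$, the middle step being Proposition \ref{LemmaCui} with $T=1$, $(p,q)=(1,\infty)$ (the vertex $B=(1,0)\in\Xi_0$), evaluated at the instant $\operatorname{sgn}t\in\{-1,1\}\subset[-1,1]$: its constant depends only on $T$ and $(p,q)$, not on the dispersion coefficients, so the auxiliary coefficient $\epsilon|t|^{1/2}$ is harmless. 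For $\epsilon=0$ the same computation applies with no change of parameter, and the part of Proposition \ref{LemmaCui} valid for $\epsilon=0$ at every $t\ne0$ is what is needed.

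In the anisotropic case the isotropic dilation only reproduces the weaker rate $|t|^{-n/4}$, since under $D_\lambda$ the $L^1$-norm sees only the full dimension $n$, whereas the second-order term $\epsilon\Delta$ is isotropic while the fourth-order term is not; so one must use instead the factorization $I_{\epsilon,\delta}(\cdot,t)=I^1_{\epsilon,\delta}(x',t)\,I^2_{\epsilon,\delta}(x'',t)$ with $x'=(x_1,\dots,x_d)$, $x''=(x_{d+1},\dots,x_n)$, recalled right after (\ref{DefGe}). The factor $I^1$ is a product of $d$ one-dimensional integrals $\int_{\mathbb{R}}e^{ix_j\xi_j-it(\epsilon\xi_j^2-\delta\xi_j^4)}\,d\xi_j$; after $\xi_j\mapsto|t|^{-1/4}\eta_j$ the phase becomes $|t|^{-1/4}x_j\eta_j-\operatorname{sgn}(t)\,\epsilon|t|^{1/2}\eta_j^2+\operatorname{sgn}(t)\,\delta\eta_j^4$, whose fourth derivative equals $\pm24\delta$, so van der Corput's lemma bounds each such integral by $C|t|^{-1/4}$ uniformly in $x_j$, in $\epsilon$ and in $\delta=\pm1$; hence $\|I^1_{\epsilon,\delta}(\cdot,t)\|_{L^\infty}\le C|t|^{-d/4}$. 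The factor $I^2$ is a product of $n-d$ free one-dimensional Schr\"odinger kernels $\int_{\mathbb{R}}e^{ix_j\xi_j-it\epsilon\xi_j^2}\,d\xi_j=(\pi/|t\epsilon|)^{1/2}e^{-ix_j^2/(4t\epsilon)}$, so $\|I^2_{\epsilon,\delta}(\cdot,t)\|_{L^\infty}\le C|t|^{-(n-d)/2}$. Multiplying and using Young's inequality, $\|G_{\epsilon,\delta}(t)\varphi\|_{L^\infty}\le\|I_{\epsilon,\delta}(\cdot,t)\|_{L^\infty}\|\varphi\|_{L^1}$, we obtain the endpoint with $b_1=d/4+(n-d)/2=(2n-d)/4$ and a constant independent of $\epsilon,\delta,t$.

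The step I expect to be the real obstacle is precisely this $L^1\to L^\infty$ bound with the correct decay rate: for the anisotropic mixed dispersion it does not come out of scaling Proposition \ref{LemmaCui} alone, and one must go back to the explicit kernel, where the one delicate point is a van der Corput estimate for the one-dimensional quartic phase that is uniform in the coefficient of the quadratic term, so that the coefficient created by the rescaling produces no loss in the constant. Once this endpoint is secured, the $L^2$-unitarity and the Riesz--Thorin interpolation are completely routine.
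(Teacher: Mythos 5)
Your overall skeleton is the paper's: conservation of the $L^2$ norm, an $L^1\to L^\infty$ bound for the kernel with rate $|t|^{-n/4}$ (isotropic) or $|t|^{-(2n-d)/4}$ (anisotropic), and interpolation between these two endpoints (the paper uses real interpolation, you use Riesz--Thorin; for the Lebesgue-space statement this difference is immaterial). Your anisotropic argument is in fact the paper's proof written out in more detail: factor the kernel as $I_{\epsilon,\delta}=I^1_{\epsilon,\delta}I^2_{\epsilon,\delta}$, apply van der Corput to each one-dimensional quartic factor (the full phase has fourth derivative $24t\delta$, so the bound $C|t|^{-1/4}$ is uniform in $\epsilon$ and in $\delta=\pm1$), use the explicit free Schr\"odinger kernel for $I^2$, and conclude by Young. (Your explicit formula shows $\sup_x|I^2_{\epsilon,\delta}(x,t)|=(\pi/|\epsilon t|)^{(n-d)/2}$, so the asserted $\epsilon$-independence there really rests on the normalization $\epsilon=\pm1$; the paper's proof makes the identical claim, so this is not a point of divergence.)

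The genuine gap is in your isotropic endpoint. There you never estimate the kernel: you write $G_{\epsilon,\delta}(t)=D_{|t|^{-1/4}}G_{\epsilon|t|^{1/2},\delta}(\operatorname{sgn}t)D_{|t|^{1/4}}$ and invoke Proposition \ref{LemmaCui} at the vertex $B=(1,0)$, asserting that its constant does not depend on the dispersion coefficients. That assertion is not available. The paper fixes the normalization $\delta=\pm1$, $\epsilon\in\{0,\pm1\}$ from the start, and Proposition \ref{LemmaCui} is imported from Cui's fixed-coefficient theorems, so writing $C=C(T,p,q)$ there only means no dependence is recorded for those finitely many normalized operators; after your rescaling the coefficient $\epsilon|t|^{1/2}$ sweeps all of $(0,\infty)$, and uniformity of the $L^1\to L^\infty$ constant in that coefficient is exactly the nontrivial content of Lemma \ref{LemmaNoT} --- it must be proved, not quoted. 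Nor is it a bookkeeping formality: for the $n$-dimensional isotropic symbol, a stationary-phase computation on the sphere $|\xi|^2=\beta/2$ of degenerate frequencies of the phase $\beta|\xi|^2-|\xi|^4$ indicates that the time-one kernel at $x=0$ grows like $\beta^{(n-2)/2}$, so coefficient-uniformity of the $(1,\infty)$ bound cannot simply be inherited from the fixed-coefficient statement when $n\geq 3$. The paper avoids the detour altogether: it applies van der Corput (Stein, Proposition VIII.2) directly to the phase $x\cdot\xi-t(\epsilon|\xi|^2-\delta|\xi|^4)$, whose fourth derivative $24t\delta$ does not involve $\epsilon$, so the $\epsilon,\delta,t$-independence of the constant is immediate --- the very computation you already perform for $I^1$ in the anisotropic case, and the way to repair your isotropic step. (Both that displayed computation in the paper and your repaired version are genuinely one-dimensional; for $n\geq2$ the isotropic symbol $|\xi|^4$ does not factor, so more than the quoted line is needed in either account, but in any case the route is the oscillatory-integral estimate, not a citation of Proposition \ref{LemmaCui} with a rescaled coefficient.)
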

\begin{proof} It is clear that
$\|G_{\epsilon,\delta}(t)\varphi\|_{L^2}=\|\varphi\|_{L^2},$ in both
cases, the isotropic and anisotropic dispersion.  Now,  for the
isotropic case define $h(\xi):=\frac{z\xi}{t}-(\epsilon
\xi^2-\delta\xi^4)$ and since $|h^{(4)}(\xi)|=24,$ we can use
Proposition VIII. 2 in Stein \cite{SteinLibro1}  in order to obtain
\[\left |\int_{-\infty}^{\infty}e^{ith(\xi)}d\xi\right |\leq C|t|^{-1/4}.\]
Note that the constant $C$ given above does not depend on
$\epsilon$ and $\delta.$ From Young inequality we have that
$\|G_{\epsilon,\delta}(t)\varphi\|_{\infty}\leq C
|t|^{-1/4}\|\varphi\|_{L^1}.$ Then the result follows by real
interpolation.\newline The anisotropic case is obtained in a similar
way. Indeed, we only need to note that
\[|I^1_{\epsilon,\delta}(x,t)|\leq C_1|t|^{-\frac{d}{4}}\ \ \ \text{and}\ \ \ |I^1_{\epsilon,\delta}(x,t)|\leq C_2|t|^{-\frac{n-d}{2}},\]
where $C_1$ and $C_2$ are independent of $t,
\epsilon$ and $\delta.$  Consequently
\[|I_{\epsilon,\delta}(x,t)|=|I^1_{\epsilon,\delta}(x,t)I^2_{\epsilon,\delta}(x,t)|\leq C|t|^{-\frac{2n-d}{4}}.\]
Then, the proof of lemma is finished.
\end{proof}
\begin{lemma}\label{LinEstLoc} Let $T>0,$ $1\leq d\leq \infty $  and  $1\leq p,q\leq \infty$ satisfying $(1/p,1/q)\in\Xi_0\setminus \partial \Xi_0.$ Then, there exists a positive
constant $C=C(T,p,q)>0$ such that
\begin{equation}
\Vert G_{\epsilon,\delta}(t)\varphi\Vert _{(q,d)}\leq C\vert
t\vert^{-b_l}\Vert \varphi\Vert _{(p,d)},
\end{equation}
for all  $-T\leq t\leq T$ and  measurable $\varphi.$ Here, $b_l$ is
defined as in (\ref{e1}). Moreover, if $\epsilon=0$ the above
estimate holds for all $t\neq 0.$
\end{lemma}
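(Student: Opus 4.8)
The plan is to deduce the Lorentz-space bound from the strong-type $L^p$--$L^q$ estimates of Proposition~\ref{LemmaCui} by means of the off-diagonal real interpolation theorem. Fix $(p,q)$ with $z:=(1/p,1/q)\in\Xi_0\setminus\partial\Xi_0$. Since $\Xi_0$ is a nondegenerate convex quadrilateral and $z$ is one of its interior points, every straight line through $z$ meets $\partial\Xi_0$ in exactly two points; among all these lines only four need to be excluded --- the one joining $z$ to $P_0$, the one joining $z$ to $Q_0$, the horizontal one, and the vertical one --- so one may choose a line $\ell$ through $z$ avoiding all four. Its two endpoints $z_0=(1/p_0,1/q_0)$ and $z_1=(1/p_1,1/q_1)$ then lie in $\Xi_0$ (recall that $\Xi_0$ contains all of its edges and omits only the apices $P_0$ and $Q_0$), satisfy $p_0\neq p_1$ and $q_0\neq q_1$ (because $\ell$ is neither vertical nor horizontal), and $z=(1-\theta)z_0+\theta z_1$ for a unique $\theta\in(0,1)$.

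By Proposition~\ref{LemmaCui} applied at $z_0$ and $z_1$, for every $t$ with $-T\le t\le T$ (and for all $t\neq 0$ when $\epsilon=0$) the operator $G_{\epsilon,\delta}(t)$ maps $L^{p_i}(\mathbb{R}^n)$ boundedly into $L^{q_i}(\mathbb{R}^n)$ with norm at most $C_i|t|^{-b_l^{(i)}}$, where $b_l^{(i)}$ denotes the exponent in (\ref{e1}) evaluated at $z_i$; since that exponent is an affine function of $(1/p,1/q)$, it follows that $b_l=(1-\theta)b_l^{(0)}+\theta b_l^{(1)}$, with $b_l$ the exponent in (\ref{e1}) at $z$. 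Invoking the real interpolation theorem for linear operators acting between the Lorentz couples $(L^{p_0},L^{p_1})$ and $(L^{q_0},L^{q_1})$ (see \cite{BL}), for each $1\le d\le\infty$ the map $G_{\epsilon,\delta}(t):(L^{p_0},L^{p_1})_{\theta,d}\to(L^{q_0},L^{q_1})_{\theta,d}$ is bounded with norm $\le C\,\big(C_0|t|^{-b_l^{(0)}}\big)^{1-\theta}\big(C_1|t|^{-b_l^{(1)}}\big)^{\theta}$. Using the identifications $(L^{p_0},L^{p_1})_{\theta,d}=L^{(p,d)}$ and $(L^{q_0},L^{q_1})_{\theta,d}=L^{(q,d)}$, which are valid precisely because $p_0\neq p_1$ and $q_0\neq q_1$, and absorbing the interpolation constants into a constant $C=C(T,p,q)$, one gets $\Vert G_{\epsilon,\delta}(t)\varphi\Vert_{(q,d)}\le C|t|^{-b_l}\Vert\varphi\Vert_{(p,d)}$ for $-T\le t\le T$ and measurable $\varphi$, and for all $t\neq 0$ when $\epsilon=0$; observe that the interior points of $\Xi_0$ satisfy $1<p<\infty$ and $1<q<\infty$, so all the Lorentz norms involved are well defined for every admissible $d$.

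I do not expect any genuine analytic difficulty here: once Proposition~\ref{LemmaCui} is available, the statement is a routine application of abstract real interpolation. The only point that calls for a little care is the elementary convex-geometry step used above, namely that through every interior point of the quadrilateral $\Xi_0$ one can draw a chord whose endpoints belong to $\Xi_0$, avoid the two excluded apices $P_0$ and $Q_0$, and are neither horizontally nor vertically aligned --- this last property is exactly what guarantees $p_0\neq p_1$ and $q_0\neq q_1$, and hence that the Lorentz-couple interpolation identities apply.
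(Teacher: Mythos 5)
Your proof is correct and follows essentially the same route as the paper: choose two points of $\Xi_0$ whose convex combination is $(1/p,1/q)$, apply Proposition \ref{LemmaCui} at those endpoints, and pass to Lorentz norms by real interpolation, using that $b_l$ is affine in $(1/p,1/q)$. Your version is merely a bit more explicit than the paper's about choosing the chord so that $p_0\neq p_1$, $q_0\neq q_1$ and its endpoints avoid the excluded apices $P_0,Q_0$, which is a detail the paper leaves implicit.
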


\begin{proof} We only make the proof of the isotropic case; the anisotropic case can be proved in an analogous way.
Since $\Xi_0$ is convex we can chose $(1/{p_0},1/{q_0}),$
$(1/{p_1},1/{q_1})\in \Xi_0$  such that $\frac{1}{p}=\frac{\theta
}{p_{0}}+\frac{1-\theta }{p_{1}}$ and $\frac{1}{q}=\frac{\theta
}{q_{0}}+\frac{1-\theta }{q_{1}},$ with $0<\theta <1.$ From
Proposition \ref{LemmaCui} we have that
$G_{\epsilon,\delta}(t):L^{p_0}\rightarrow L^{q_0}$ and
$G_{\epsilon,\delta}(t):L^{p_1}\rightarrow L^{q_1},$ with
norms bounded by
\[
\Vert G_{\epsilon,\delta}(t)\Vert _{p_0\rightarrow q_0}\leq C\vert t\vert ^{-n/4(1/{p_0}-1/{q_0})}\]
 and
 \[\Vert G_{\epsilon,\delta}(t)\Vert _{p_1\rightarrow q_1}\leq C\vert t\vert ^{-n/4(1/{p_1}-1/{q_1})}.\]
Since $L^{p}=L^{(p,p)},$ using real interpolation we get
\begin{align*}
\Vert G_{\epsilon,\delta}(t)\Vert _{(p,d)\rightarrow (q,d)}&\leq C\vert t\vert^{-n/4(1/{p_0}-1/{q_0})\theta}\vert t\vert ^{-n/4(1/{p_1}-1/{q_1})(1-\theta)}\\
&=C\vert t\vert ^{-n/4(1/p-1/q)},
\end{align*}
which finishes the proof of the lemma.
\end{proof}

In the same spirit of Lemma \ref{LinEstLoc} one can obtain the next result, which gives a linear estimate in Lorentz spaces. The proof follows from Lemma \ref{LemmaNoT} and real interpolation.
We omit it.
\begin{lemma}\label{LinEstGlo} Let $1\leq d\leq \infty ,$ $1< p< 2$ and $p^{\prime }$ such that $\frac{1}{p}+\frac{1}{p^{\prime }}=1.$ Then, there exists a positive
constant $C$ such that
\begin{equation}
\Vert G_{\epsilon,\delta}(t)\varphi\Vert _{(p^{\prime },d)}\leq
C\vert t\vert^{-b_g}\Vert \varphi\Vert _{(p,d)},
\end{equation}
for all $t\neq 0$ and measurable $\varphi.$ Here $b_g$ is as
in (\ref{e2}).
\end{lemma}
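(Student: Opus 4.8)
The plan is to deduce the Lorentz-space bound by real interpolation from the two endpoint cases of Lemma~\ref{LemmaNoT}. Taking $p=2$ there gives that $G_{\epsilon,\delta}(t)$ is an isometry on $L^{2}$, that is $\|G_{\epsilon,\delta}(t)\varphi\|_{L^{2}}=\|\varphi\|_{L^{2}}$, corresponding to decay exponent $0$; taking $p=1$ gives
\[
\|G_{\epsilon,\delta}(t)\varphi\|_{L^{\infty}}\leq C\,|t|^{-b_{\ast}}\,\|\varphi\|_{L^{1}},
\]
with $b_{\ast}=n/4$ in the isotropic case and $b_{\ast}=(2n-d)/4$ in the anisotropic case, and $C$ independent of $\epsilon,\delta,t$. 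Both bounds hold for all $t\neq 0$, and $G_{\epsilon,\delta}(t)$, being a Fourier multiplier, is a well defined linear operator on $L^{1}+L^{2}$. So the situation reduces to interpolating the linear operator $G_{\epsilon,\delta}(t)$ between the couple $(L^{1},L^{2})$ in the source and $(L^{\infty},L^{2})$ in the target, using the same parameter for both.

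First I would fix $1<p<2$ and pick $\theta\in(0,1)$ by $\frac{1}{p}=(1-\theta)\cdot 1+\theta\cdot\frac{1}{2}$; then $\theta=2/p'$, and automatically $\frac{1}{p'}=(1-\theta)\cdot 0+\theta\cdot\frac{1}{2}$. By the interpolation theorem for the real method (Bergh and L\"{o}fstr\"{o}m \cite{BL}) together with the identification of real interpolation spaces of Lebesgue spaces with Lorentz spaces, $(L^{1},L^{2})_{\theta,d}=L^{(p,d)}$ and $(L^{\infty},L^{2})_{\theta,d}=L^{(p',d)}$ (see \cite{BL}, Theorem~5.3.1), it follows that
\[
\|G_{\epsilon,\delta}(t)\varphi\|_{(p',d)}\leq C\,\bigl(|t|^{-b_{\ast}}\bigr)^{1-\theta}\|\varphi\|_{(p,d)}=C\,|t|^{-b_{\ast}(1-\theta)}\|\varphi\|_{(p,d)},
\]
for all $t\neq 0$ and measurable $\varphi$, where $C$ depends only on $p$ and $d$. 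Here the key point is that the real method bounds the norm of the interpolated operator by $M_{0}^{1-\theta}M_{1}^{\theta}$ up to such a constant, where $M_{0}(t)$ and $M_{1}(t)$ are the operator norms $L^{1}\to L^{\infty}$ and $L^{2}\to L^{2}$; since $M_{0}(t)=C|t|^{-b_{\ast}}$ and $M_{1}(t)=1$, the whole $t,\epsilon,\delta$ dependence of the resulting constant is exactly the displayed power of $|t|$.

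It remains only to match the exponent with $b_{g}$: since $1-\theta=1-2/p'=2/p-1$, one has $b_{\ast}(1-\theta)=\frac{n}{4}\bigl(\frac{2}{p}-1\bigr)$ in the isotropic case and $b_{\ast}(1-\theta)=\frac{2n-d}{4}\bigl(\frac{2}{p}-1\bigr)$ in the anisotropic case, which is precisely $b_{g}$ as in (\ref{e2}); the two cases differ only through the value of $b_{\ast}$ supplied by Lemma~\ref{LemmaNoT}. I do not expect a serious obstacle: the argument is essentially routine, and the only mild subtlety is the bookkeeping, namely keeping the $t$-independent interpolation constant separate from the $t$-dependent operator norms when reading off the decay rate.
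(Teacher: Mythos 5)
Your proof is correct and takes essentially the same route as the paper, whose (omitted) argument is likewise real interpolation based on Lemma \ref{LemmaNoT}; interpolating directly between the $L^{1}\to L^{\infty}$ and $L^{2}\to L^{2}$ endpoints rather than between two intermediate dual pairs from that lemma is an immaterial variation. Your identification of the Lorentz spaces as real interpolation spaces and the exponent bookkeeping $b_{\ast}(1-\theta)=b_{g}$ are both accurate.
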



From now on we denote the nonlinear part of the integral equation (\ref{IntEqu}) by
\begin{equation*}
\mathcal{F}(u(x,t))=i\int_{0}^t
G_{\epsilon,\delta}(t-\tau)f(|u(x,\tau)|)u(x,\tau)d\tau.
\end{equation*}

In the next lemma we estimate the nonlinear term $\mathcal{F}(u)$ in
the norm $\Vert\cdot \Vert_{\mathcal{G}_{\sigma}^{\infty }},$ which
is crucial in order to obtain existence of global mild solutions.
\begin{lemma}\label{EstNonGlo}
Let $1\leq \alpha<\infty $ and  assume that $(\alpha +1)\sigma<1.$
Then,
\begin{enumerate}
\item If $\frac{n\alpha}{4(\alpha+2)}<1$ and $A=\Delta^2,$ then there exists a constant $C_1>0$ such that
\begin{align} \label{Global-estim}
&\notag \Vert \mathcal{F}(u)-\mathcal{F}(v)\Vert_{\mathcal{G}_{\sigma}^{\infty }} \leq \\
&C_1\sup_{-\infty<t<\infty}|t|^{\sigma}\Vert u-v\Vert _{(\alpha+2,\infty)}
\sup_{-\infty <t<\infty}|t|^{\alpha\sigma }\left[ \Vert u\Vert_{(\alpha+2,\infty)}^{\alpha}+\Vert v \Vert _{(\alpha+2,\infty )}^{\alpha}\right],
\end{align}
for all $u,v$ such that the right hand side of (\ref{Global-estim})
is finite.
\item If $\frac{(2n-d)\alpha}{4(\alpha+2)}<1$ and $A=\sum_{i=1}^d\partial_{x_ix_ix_ix_i},$ then there exists a constant $C_2>0$ such that
\begin{align} \label{Global-estim2}
&\notag \Vert \mathcal{F}(u)-\mathcal{F}(v)\Vert_{\mathcal{G}_{\sigma}^{\infty }} \leq \\
&C_2\sup_{-\infty<t<\infty}|t|^{\sigma}\Vert u-v\Vert _{(\alpha+2,\infty)}
\sup_{-\infty <t<\infty}|t|^{\alpha\sigma }\left[ \Vert u\Vert_{(\alpha+2,\infty)}^{\alpha}+\Vert v \Vert _{(\alpha+2,\infty )}^{\alpha}\right],
\end{align}
for all $u,v$ such that the right hand side of (\ref{Global-estim2})
is finite.
\end{enumerate}
\end{lemma}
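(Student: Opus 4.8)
The plan is to estimate $\mathcal{F}(u)-\mathcal{F}(v)$ by combining the dispersive decay from Lemma \ref{LinEstGlo} with the H\"older inequality in Lorentz spaces, exactly in the scaling-invariant functional framework of $\mathcal{G}^{\infty}_{\sigma}$. First I would fix the isotropic case $A=\Delta^2$ and write, for $t>0$ say,
\[
\mathcal{F}(u)(t)-\mathcal{F}(v)(t)=i\int_0^t G_{\epsilon,\delta}(t-\tau)\bigl[f(|u|)u-f(|v|)v\bigr](\tau)\,d\tau ,
\]
and apply $\|G_{\epsilon,\delta}(t-\tau)\,\cdot\,\|_{(\alpha+2,\infty)}\le C|t-\tau|^{-b_g}\|\cdot\|_{(p,\infty)}$ with $p$ the dual exponent chosen so that $p'=\alpha+2$, i.e. $\tfrac1p=1-\tfrac1{\alpha+2}=\tfrac{\alpha+1}{\alpha+2}$; then $b_g=\tfrac n4\bigl(\tfrac2p-1\bigr)=\tfrac{n\alpha}{4(\alpha+2)}$, which by hypothesis is $<1$ so the kernel $|t-\tau|^{-b_g}$ is locally integrable. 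The pointwise algebraic inequality coming from the hypothesis on $f$, namely $|f(|u|)u-f(|v|)v|\le C\,|u-v|\,(|u|^{\alpha}+|v|^{\alpha})$, is then fed into the Lorentz H\"older inequality: writing $\tfrac{1}{p}=\tfrac{1}{\alpha+2}+\tfrac{\alpha}{\alpha+2}$ we get
\[
\bigl\|\,|u-v|\,(|u|^{\alpha}+|v|^{\alpha})(\tau)\bigr\|_{(p,\infty)}\le C\,\|u-v(\tau)\|_{(\alpha+2,\infty)}\bigl(\|u(\tau)\|_{(\alpha+2,\infty)}^{\alpha}+\|v(\tau)\|_{(\alpha+2,\infty)}^{\alpha}\bigr),
\]
using that the $\alpha$-th power distributes over the Lorentz quasi-norm up to constants (equivalently $\||g|^{\alpha}\|_{(r,\infty)}=\|g\|_{(\alpha r,\infty)}^{\alpha}$).

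Next I would insert the definition of the $\mathcal{G}^\infty$-norms: bound $\|u-v(\tau)\|_{(\alpha+2,\infty)}\le |\tau|^{-\sigma}M$ and $\|u(\tau)\|^\alpha+\|v(\tau)\|^\alpha\le |\tau|^{-\alpha\sigma}N$, where $M=\sup_t|t|^\sigma\|u-v\|$ and $N=\sup_t|t|^{\alpha\sigma}(\|u\|^\alpha+\|v\|^\alpha)$. This reduces the estimate to controlling the scalar integral
\[
|t|^{\sigma}\int_0^t |t-\tau|^{-b_g}\,|\tau|^{-(\alpha+1)\sigma}\,d\tau
= |t|^{\sigma}\,|t|^{\,1-b_g-(\alpha+1)\sigma}\int_0^1(1-s)^{-b_g}s^{-(\alpha+1)\sigma}\,ds ,
\]
after the substitution $\tau=ts$. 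The Beta integral on the right converges precisely because $b_g<1$ (already assumed) and $(\alpha+1)\sigma<1$ (the standing hypothesis of the lemma); and the power of $|t|$ collapses to $0$ since $b_g+(\alpha+1)\sigma = \tfrac{n\alpha}{4(\alpha+2)}+(\alpha+1)\sigma$, and a short computation with $\sigma=\tfrac1\alpha-\tfrac{n}{4(\alpha+2)}$ gives $(\alpha+1)\sigma = 1+\sigma-\tfrac{n\alpha}{4(\alpha+2)}$, i.e. $b_g+(\alpha+1)\sigma=1+\sigma$, so the total exponent of $|t|$ is $\sigma+1-(1+\sigma)=0$. Taking the supremum over $t$ (and symmetrically over $t<0$) yields (\ref{Global-estim}) with $C_1$ the product of the Lorentz-H\"older constant, the dispersive constant, and the Beta value.

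For the anisotropic case $A=\sum_{i=1}^d\partial_{x_ix_ix_ix_i}$ the argument is identical verbatim, with $b_g=\tfrac{2n-d}{4}\bigl(\tfrac2p-1\bigr)=\tfrac{(2n-d)\alpha}{4(\alpha+2)}$ (again $<1$ by hypothesis) and $\sigma=\tfrac1\alpha-\tfrac{2n-d}{4(\alpha+2)}$, for which one checks in the same way that $b_g+(\alpha+1)\sigma=1+\sigma$, so the power of $|t|$ cancels and (\ref{Global-estim2}) follows with constant $C_2$. The one point that needs a little care — and the place where I expect the only real friction — is the legitimacy of the Lorentz H\"older step at the endpoint exponent $p$: one must check $\bigl(\tfrac1p,\tfrac1{\alpha+2}\bigr)$ and the triple of exponents satisfy the constraints stated for the Lorentz H\"older inequality in the introduction ($1<p_1\le\infty$, $1<p_2$, $r<\infty$, strict $\tfrac1{p_1}+\tfrac1{p_2}<1$, and the $d$-index condition with all indices $\infty$), which holds exactly because the pair lies in $\Xi_0\setminus\partial\Xi_0$ as required for $\mathcal{G}^\infty_\sigma$ to make sense; after that everything is the routine scaling bookkeeping above.
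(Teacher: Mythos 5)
Your argument is correct and follows essentially the same route as the paper's proof: apply Lemma \ref{LinEstGlo} with $p'=\alpha+2$, use the Lipschitz property of $f$ together with the Lorentz--H\"{o}lder inequality, insert the weighted norms, and evaluate the Beta-type integral, whose convergence uses $b_g<1$ and $(\alpha+1)\sigma<1$ and whose $|t|$-exponent vanishes by the definition of $\sigma$. One small remark: the H\"{o}lder step is legitimate simply because $\tfrac{1}{\alpha+2}+\tfrac{\alpha}{\alpha+2}<1$ and $p=\tfrac{\alpha+2}{\alpha+1}\in(1,2)$ as required by Lemma \ref{LinEstGlo}; the set $\Xi_0$ is relevant only for the local estimates, not for the global space $\mathcal{G}^{\infty}_{\sigma}$.
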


\begin{proof}  Without loss of generality, we can consider only the case
$t>0$. Using Lemma  \ref{LinEstGlo}, the properties of $f$ and
H\"{o}lder inequality, we have
\begin{align*}
 \Vert \mathcal{F}(u)-\mathcal{F}(v)\Vert _{(p',\infty)}&\leq C\int_{0}^{t}(t-\tau)^{-\frac{n(2-p)}{4p}} \Vert f(|u|)u-f(|v|)v\Vert _{(p,\infty )}d\tau \\
&\leq C \int_{0}^{t}(t-\tau)^{-\frac{n(2-p)}{4p}} \Vert |u-v|(|u|^{\alpha}+|v|^{\alpha})\Vert _{(p,\infty )}d\tau\\
& \leq C\int_{0}^{t}(t-\tau)^{-\frac{n(2-p)}{4p}}\Vert
u-v\Vert_{(p',\infty )}\left[ \Vert u\Vert _{(p',\infty
)}^{\alpha}+\Vert v \Vert _{(p',\infty)}^{\alpha}\right] d\tau.
\end{align*}
Since $\frac{1}{p}+\frac{1}{p'}=1$ and we used the H\"{o}lder inequality,  we get the restriction $p'=\alpha+2.$ Hence
\begin{align*}
 &\Vert \mathcal{F}(u)-\mathcal{F}(v)\Vert _{(\alpha+2,\infty)}\leq \\
 &C\int_{0}^{t}(t-\tau)^{-\frac{n\alpha}{4(\alpha+2)}}\Vert u-v\Vert _{(\alpha+2,\infty )}\left[ \Vert u\Vert _{(\alpha+2,\infty )}^{\alpha}+\Vert v \Vert _{(\alpha+2,\infty)}^{\alpha}\right] d\tau\leq\\
&  C\sup_{t>0}t^{\sigma}\Vert
u-v\Vert_{(\alpha+2,\infty)}\sup_{t>0}t^{\alpha\sigma}\left[ \Vert
u\Vert_{(\alpha+2,\infty )}^{\alpha}+\Vert v\Vert_{(\alpha+2,\infty
)}^{\alpha}\right]t^{-\sigma}t^{1-\frac{n\alpha}{4(\alpha+2)}-\sigma\alpha}.
\end{align*}
From $1-\frac{n\alpha}{4(\alpha+2)}-\sigma\alpha=0,$ we
conclude that
\begin{align}\label{DesNon2aa}
 \notag t^{\sigma}&\Vert \mathcal{F}(u)-\mathcal{F}(v)\Vert _{(\alpha+2,\infty )} \leq \\
 &C\sup_{t>0}t^{\sigma}\Vert u-v\Vert _{(\alpha+2,\infty)} \sup_{t>0}t^{\alpha\sigma}\left[ \Vert u\Vert_{(\alpha+2,\infty )}^{\alpha}+\Vert v \Vert _{(\alpha+2,\infty )}^{\alpha} \right].
\end{align}
Taking the supremum in (\ref{DesNon2aa}) we conclude the proof of the estimate (\ref{Global-estim}). The proof of (\ref{Global-estim2}) follows in a similar way.
\end{proof}

In the next lemma we estimate the nonlinear term $\mathcal{F}(u)$ in
the norm $\Vert\cdot \Vert_{\mathcal{G}_{\beta}^T},$ which is
crucial in order to obtain existence of local-in-time mild solutions. Here we  use the notation $A\apprle B$ which means that there exists a constant $c>0$ such that $A\leq cB.$
\begin{lemma}\label{NonEstLoc}
Let $1\leq \alpha<\infty,$  and $(1/p,1/{(\alpha+1)p})\in\Xi_0\setminus\partial\Xi_0.$
\begin{enumerate}
\item If $\frac{n\alpha}{4p}<1$ and $A=\Delta^2,$ then there exists a constant $C_3>0$ such that
\begin{align}
\Vert \mathcal{F}(u)-&\mathcal{F}(v)\Vert_{\mathcal{G}_{\beta}^T} \leq  C_3\sup_{-T<t<T}|t|^{\beta}\Vert u-v\Vert _{((\alpha+1)p,\infty)}\nonumber\\
&\times\sup_{-T<t<T}|t|^{\beta\alpha}\left[ \Vert u\Vert_{((\alpha+1)p,\infty )}^{\alpha}+\Vert v\Vert _{((\alpha+1)p,\infty )}^{\alpha}\right] T^{1-\beta(\alpha+1)},\label{local-estim1}
\end{align}
for all $u,v$ such that the right hand side of (\ref{local-estim1}) is finite.
\item If $\frac{(2n-d)\alpha}{4p}<1$ and $A=\sum_{i=1}^d\partial_{x_ix_ix_ix_i},$ then there exists a constant $C_4>0$ such that
\begin{align}
 \Vert \mathcal{F}(u)-&\mathcal{F}(v)\Vert_{\mathcal{G}_{\beta}^T} \leq  C_4\sup_{-T<t<T}|t|^{\beta}\Vert u-v\Vert _{((\alpha+1)p,\infty)}\nonumber\\
 &\times \sup_{-T<t<T}|t|^{\beta\alpha}\left[ \Vert u\Vert_{((\alpha+1)p,\infty )}^{\alpha}+\Vert v\Vert _{((\alpha+1)p,\infty )}^{\alpha}\right] T^{1-\beta(\alpha+1)},\label{local-estim2}
\end{align}
for all $u,v$ such that the right hand side of (\ref{local-estim2}) is finite.
\end{enumerate}
\end{lemma}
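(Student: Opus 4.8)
The plan is to repeat the reasoning of Lemma~\ref{EstNonGlo}, only replacing the global decay estimate of Lemma~\ref{LinEstGlo} by the local one of Lemma~\ref{LinEstLoc}. As there, it suffices to treat $t>0$. Starting from the Duhamel term, I would first write
\[
\|\mathcal{F}(u)-\mathcal{F}(v)\|_{((\alpha+1)p,\infty)}\le\int_0^t\big\|G_{\epsilon,\delta}(t-\tau)\big(f(|u|)u-f(|v|)v\big)\big\|_{((\alpha+1)p,\infty)}\,d\tau
\]
and apply Lemma~\ref{LinEstLoc} with source exponent $p$ and target exponent $q=(\alpha+1)p$; this is admissible exactly because $(1/p,1/((\alpha+1)p))\in\Xi_0\setminus\partial\Xi_0$. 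The resulting weight is $|t-\tau|^{-b_l}$ with, in the isotropic case, $b_l=\frac n4\big(\frac1p-\frac1{(\alpha+1)p}\big)=\frac{n\alpha}{4(\alpha+1)p}=\beta$ (and $b_l=\frac{(2n-d)\alpha}{4(\alpha+1)p}=\beta$ in the anisotropic case), matching the exponent $\beta$ from (\ref{Defbeta}).

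Next I would estimate $\|f(|u|)u-f(|v|)v\|_{(p,\infty)}$. From the hypothesis $|f(x)-f(y)|\le C_f|x-y|(|x|^{\alpha-1}+|y|^{\alpha-1})$ one has the pointwise bound $|f(|u|)u-f(|v|)v|\le C|u-v|(|u|^\alpha+|v|^\alpha)$, and then the H\"older inequality in Lorentz spaces (with all third indices equal to $\infty$) splits this as the product of $\|u-v\|_{((\alpha+1)p,\infty)}$ and $\||u|^\alpha+|v|^\alpha\|_{((\alpha+1)p/\alpha,\infty)}$, the exponents being forced by $\frac1p=\frac1{(\alpha+1)p}+\frac\alpha{(\alpha+1)p}$; the admissibility conditions of that inequality hold because the interior of $\Xi_0$ forces $1<p<2$, so that $(\alpha+1)p>1$, $(\alpha+1)p/\alpha>1$ and $p<\infty$. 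Pulling out the norms through $\|w(\tau)\|_{((\alpha+1)p,\infty)}\le\tau^{-\beta}\|w\|_{\mathcal{G}_\beta^T}$, the integrand is dominated by $(t-\tau)^{-\beta}\tau^{-\beta(\alpha+1)}$ times $\sup_{-T<t<T}|t|^{\beta}\|u-v\|_{((\alpha+1)p,\infty)}$ and $\sup_{-T<t<T}|t|^{\beta\alpha}\big(\|u\|_{((\alpha+1)p,\infty)}^\alpha+\|v\|_{((\alpha+1)p,\infty)}^\alpha\big)$.

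The last step is the Beta-function identity $\int_0^t(t-\tau)^{-\beta}\tau^{-\beta(\alpha+1)}\,d\tau=B\big(1-\beta,\,1-\beta(\alpha+1)\big)\,t^{1-\beta(\alpha+2)}$, and this is where the hypotheses really enter: convergence needs $1-\beta>0$ and $1-\beta(\alpha+1)>0$, both of which follow from $\beta(\alpha+1)=\frac{n\alpha}{4p}<1$ (resp. $\frac{(2n-d)\alpha}{4p}<1$) together with $\beta<\beta(\alpha+1)$. Multiplying through by $t^\beta$ leaves the factor $t^{1-\beta(\alpha+1)}$, whose exponent is positive, so for $0<t<T$ it is bounded by $T^{1-\beta(\alpha+1)}$; taking the supremum over $-T<t<T$ then yields (\ref{local-estim1}). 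The anisotropic estimate (\ref{local-estim2}) is obtained by the same computation with $n/4$ replaced by $(2n-d)/4$ and $\beta$ as in the second line of (\ref{Defbeta}). The only delicate point is the bookkeeping just described --- confirming that both Beta exponents are positive and that the surviving power of $T$ is $T^{1-\beta(\alpha+1)}$ with positive exponent, which is also exactly what will make the associated solution map a contraction for small $T$ in the proof of the local existence theorem.
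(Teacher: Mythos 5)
Your proposal is correct and follows essentially the same route as the paper: Lemma \ref{LinEstLoc} with exponents $(p,(\alpha+1)p)$ so that $b_l=\beta$, the Lipschitz bound on $f$ plus H\"older in Lorentz spaces forcing $q=(\alpha+1)p$, and the singular time integral yielding the factor $t^{1-\beta(\alpha+2)}$, hence $T^{1-\beta(\alpha+1)}$ after multiplying by $t^{\beta}$. The only difference is that you spell out the Beta-function convergence conditions ($\beta<1$ and $\beta(\alpha+1)=\tfrac{n\alpha}{4p}<1$) and the admissibility of the H\"older exponents, which the paper leaves implicit.
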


\begin{proof} We only make the proof of the first inequality; the proof of the second one  is analogous. Without loss of generality suppose that $t>0.$ Then, from Lemma \ref{LinEstLoc}, the properties of $f$ and  H\"{o}lder inequality, we obtain
\begin{align*}
 \Vert \mathcal{F}(u)-\mathcal{F}(v)\Vert _{(q,\infty)}&\leq \int_{0}^{t}(t-\tau)^{-b_l}\left\Vert f(|u|)u-f(|v|)v \right\Vert _{(p,\infty )}d\tau\\
 & \leq C\int_{0}^{t}(t-\tau)^{-b_l}\left\Vert |u-v|(|u|^{\alpha}+|v|^{\alpha})\right\Vert _{(p,\infty )}d\tau\\
 & \leq C\int_{0}^{t}(t-\tau)^{-b_l}\Vert u-v\Vert_{(q,\infty)}\left(\Vert u\Vert^{\alpha}_{(q,\infty)}+\Vert v\Vert^{\alpha} _{(q,\infty )}\right)d\tau.
\end{align*}
Since we used the H\"{o}lder inequality the next restriction appears $q=(\alpha+1)p.$ Therefore
\begin{align*}
&\Vert \mathcal{F}(u)-\mathcal{F}(v)\Vert _{((\alpha+1)p,\infty)}\apprle\\
&\int_{0}^{t}(t-\tau)^{-\frac{n\alpha}{4p(\alpha+1)}}\Vert u-v\Vert_{((\alpha+1)p,\infty)}\left(\Vert u\Vert^{\alpha}_{((\alpha+1)p,\infty)}+\Vert v\Vert^{\alpha} _{((\alpha+1)p,\infty )}\right)d\tau\apprle \\
&\sup_{0<t<T}t^{\beta}\Vert u-v\Vert_{((\alpha+1)p,\infty)}\sup_{0<t<T}t^{\alpha\beta}\left[ \Vert u\Vert_{((\alpha+1)p,\infty )}^{\alpha}+\Vert v\Vert_{((\alpha+1)p,\infty )}^{\alpha}\right] t^{1-\beta(\alpha+2)}.
\end{align*}
Hence
\begin{align*}
t^{\beta} \Vert \mathcal{F}(u)-\mathcal{F}(v)\Vert& _{((\alpha+1)p,\infty)} \leq C\sup_{0<t<T}t^{\beta}\Vert u-v\Vert _{((\alpha+1)p,\infty)}\\
&\times \sup_{0<t<T}t^{\beta\alpha}\left[ \Vert u\Vert_{((\alpha+1)p,\infty )}^{\alpha}+\Vert v\Vert _{((\alpha+1)p,\infty )}^{\alpha}\right] T^{1-\beta(\alpha+1)}.
\end{align*}
Taking supremum on $t$ in the last inequality, we obtained the desired result.
\end{proof}
\section{Local and global solutions}
In this section we prove some results of local and global well-posedness for the Schr\"{o}dinger equations with isotropic and anisotropic fourth-order dispersion in the setting of Lorentz spaces. 
\subsection{Local-in-time solutions}
\begin{theorem}[Local-in-time solutions]
\label{LocalTheo} Let $1\leq \alpha<\infty,$ and
$(1/p,1/{(\alpha+1)p})\in\Xi_0\setminus\partial\Xi_0.$ Consider
$\frac{n\alpha}{4p}<1$ if $A=\Delta^2,$ or
$\frac{(2n-d)\alpha}{4p}<1$ if
$A=\sum_{i=1}^d\partial_{x_ix_ix_ix_i}.$ If $u_{0}\in
\mathcal{S}'(\mathbb{R}^n)$ such that
$\|G_{\epsilon,\delta}(t)u_0\|_{\mathcal{G}_{\beta}^T}$ is finite,
then there exists $0<T^{\ast }\leq T<\infty $ such that the initial
value problem (\ref{FoSch}) has a mild solution $u\in
\mathcal{G}_{\beta}^{T^*},$ satisfying $u(t)\rightharpoonup u_0$ in
$\mathcal{S}'(\mathbb{R}^n)$ as $t\rightarrow 0^+.$ The solution $u$
is unique in a given ball of $\mathcal{G}_{\beta}^{T^*}$ and the
data-solution map $u_0\mapsto u$ into $\mathcal{G}_{\beta}^{T^*}$ is
Lipschitz.
\end{theorem}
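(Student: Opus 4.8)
\proof
The plan is to solve the integral equation (\ref{IntEqu}) by a contraction–mapping argument in a closed ball of the Banach space $\mathcal{G}_{\beta}^{T^{*}}$, for a suitably small $T^{*}\in(0,T]$. Put $a:=\|G_{\epsilon,\delta}(\cdot)u_{0}\|_{\mathcal{G}_{\beta}^{T}}$, which is finite by hypothesis, and observe that $\|G_{\epsilon,\delta}(\cdot)u_{0}\|_{\mathcal{G}_{\beta}^{T'}}\le a$ for every $0<T'\le T$, since the supremum defining the norm is then taken over a smaller interval. Define $\Phi(u)(t)=G_{\epsilon,\delta}(t)u_{0}+\mathcal{F}(u)(t)$ and let $B_{R}=\{u\in\mathcal{G}_{\beta}^{T^{*}}:\|u\|_{\mathcal{G}_{\beta}^{T^{*}}}\le R\}$ with $R=2a$ (if $a=0$, take any $R>0$; the argument is unchanged). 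Note that $\mathcal{F}(0)=0$, since $f(0)=0$.

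The hypotheses of the theorem coincide exactly with those of Lemma \ref{NonEstLoc}, so that lemma applies on the interval $(-T^{*},T^{*})$. Using (\ref{local-estim1}) (resp. (\ref{local-estim2})) together with the elementary identity $\sup_{|t|<T^{*}}|t|^{\alpha\beta}\|w(t)\|_{((\alpha+1)p,\infty)}^{\alpha}=\|w\|_{\mathcal{G}_{\beta}^{T^{*}}}^{\alpha}$, we get, for $u,v\in B_{R}$,
\begin{align*}
\|\Phi(u)\|_{\mathcal{G}_{\beta}^{T^{*}}}&\le a+C\,R^{\alpha+1}(T^{*})^{1-\beta(\alpha+1)},\\
\|\Phi(u)-\Phi(v)\|_{\mathcal{G}_{\beta}^{T^{*}}}&\le C\,R^{\alpha}(T^{*})^{1-\beta(\alpha+1)}\|u-v\|_{\mathcal{G}_{\beta}^{T^{*}}},
\end{align*}
where $C$ depends only on $\alpha,n,p,d$ and $C_{f}$. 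The key structural fact is that $1-\beta(\alpha+1)>0$: indeed $\beta(\alpha+1)=\tfrac{n\alpha}{4p}<1$ in the isotropic case and $\beta(\alpha+1)=\tfrac{(2n-d)\alpha}{4p}<1$ in the anisotropic case, by assumption. Hence we may fix $T^{*}\in(0,T]$ so small that $C\,R^{\alpha}(T^{*})^{1-\beta(\alpha+1)}\le\tfrac12$; then $\Phi$ maps $B_{R}$ into itself and is a contraction there. By the Banach fixed point theorem, $\Phi$ has a unique fixed point $u\in B_{R}$, which is the desired mild solution in $\mathcal{G}_{\beta}^{T^{*}}$ and is unique in that ball. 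If $u,\tilde u\in B_{R}$ are the solutions associated with data $u_{0},\tilde u_{0}$, then $u-\tilde u=G_{\epsilon,\delta}(\cdot)(u_{0}-\tilde u_{0})+\mathcal{F}(u)-\mathcal{F}(\tilde u)$, and the contraction estimate yields $\|u-\tilde u\|_{\mathcal{G}_{\beta}^{T^{*}}}\le 2\|G_{\epsilon,\delta}(\cdot)(u_{0}-\tilde u_{0})\|_{\mathcal{G}_{\beta}^{T^{*}}}$, which is the asserted Lipschitz continuity of the data–solution map.

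It remains to verify $u(t)\rightharpoonup u_{0}$ in $\mathcal{S}'(\mathbb{R}^{n})$ as $t\to0^{+}$; this I regard as the main technical point, since the bound in $\mathcal{G}_{\beta}^{T^{*}}$ alone does not force $\|\mathcal{F}(u)(t)\|_{((\alpha+1)p,\infty)}\to0$. Write $u(t)-u_{0}=(G_{\epsilon,\delta}(t)u_{0}-u_{0})+\mathcal{F}(u)(t)$ and treat each term separately against a test function $\phi\in\mathcal{S}(\mathbb{R}^{n})$. For the linear part, $\langle G_{\epsilon,\delta}(t)u_{0}-u_{0},\phi\rangle=\langle u_{0},(G_{\epsilon,\delta}(-t)-I)\phi\rangle$, and $(G_{\epsilon,\delta}(-t)-I)\phi\to0$ in $\mathcal{S}(\mathbb{R}^{n})$ as $t\to0^{+}$, by dominated convergence on the Fourier side, using that the symbol of $G_{\epsilon,\delta}(-t)$ differs from $1$ by $O(|t|(|\xi|^{2}+|\xi|^{4}))$ (resp. the analogous bound in the anisotropic case) and that $\widehat{\phi}$ decays rapidly. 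For the Duhamel part, the property of $f$ gives $\|f(|u(\tau)|)u(\tau)\|_{(p,\infty)}\le C\|u(\tau)\|_{((\alpha+1)p,\infty)}^{\alpha+1}\le C\|u\|_{\mathcal{G}_{\beta}^{T^{*}}}^{\alpha+1}|\tau|^{-\beta(\alpha+1)}$, which is integrable near $\tau=0$ because $\beta(\alpha+1)<1$; since moreover the family $\{G_{\epsilon,\delta}(-s)\phi:|s|\le T^{*}\}$ is bounded in $\mathcal{S}(\mathbb{R}^{n})$ (hence in $L^{(p',1)}$), the pairing $\langle\mathcal{F}(u)(t),\phi\rangle=i\int_{0}^{t}\langle f(|u(\tau)|)u(\tau),G_{\epsilon,\delta}(t-\tau)\phi\rangle\,d\tau$ is bounded by $C_{\phi}\|u\|_{\mathcal{G}_{\beta}^{T^{*}}}^{\alpha+1}\,t^{1-\beta(\alpha+1)}\to0$ as $t\to0^{+}$, by Lorentz-space duality. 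Combining the two limits gives $u(t)\rightharpoonup u_{0}$ in $\mathcal{S}'(\mathbb{R}^{n})$, which completes the proof.
\endproof
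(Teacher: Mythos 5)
Your proof is correct and follows essentially the same route as the paper: a Banach fixed-point argument in a ball of $\mathcal{G}_{\beta}^{T^*}$ based on Lemma \ref{NonEstLoc}, with the smallness condition $CR^{\alpha}(T^*)^{1-\beta(\alpha+1)}\le\tfrac12$ (possible since $\beta(\alpha+1)<1$) giving invariance, contraction, and the Lipschitz dependence. The only difference is that you spell out the convergence $u(t)\rightharpoonup u_0$ in $\mathcal{S}'$ (linear part plus Duhamel term estimated via Lorentz duality and $t^{1-\beta(\alpha+1)}\to0$), which the paper dispatches as a standard argument with a citation; your added detail is sound.
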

\begin{remark}\label{rem1}
\begin{enumerate}
\item[(i)] (Large class of initial data) From the definition of the norm
$\Vert\cdot\Vert_{\mathcal{G}_{\beta}^T}$ and Lemma \ref{LinEstLoc},
if we take $u_0\in L^{(p,\infty)},$ the quantity
$\|G_{\epsilon,\delta}(t)u_0\|_{\mathcal{G}_{\beta}^T}$ is finite.
\item[(ii)] (Regularity) If the initial data  is such that
 \[\sup_{-T<t<T}\vert t\vert^\beta\Vert G_{\epsilon,\delta}(t)u_0\Vert_{(p(\alpha+1),d)}<\infty,\]
  for $1\leq d <\infty,$ then the local mild solution verifies
\[\sup_{-T^*<t<T^*}\vert t\vert^\beta\Vert u\Vert_{(p(\alpha+1),d)}<\infty,\]
(possibly reducing the time of existence $T^*$).
\item[(iii)] (Finite energy solutions) From Theorem \ref{LocalTheo}
some results of local existence in Sobolev spaces can be recovered.
For that, notice
that $H^{s}(\mathbb{R}
^{n})\hookrightarrow L^{(p(\alpha+1),\infty)},$ for
 $s>0$ such that
$2<p(\alpha+1)\leq\frac{2n}{n-2s}$ if $n>2s$
($2<p(\alpha+1)\leq\infty$ if $n<2s$). Therefore, if $u_0\in H^s,$
then
\begin{align*}
\Vert G_{\epsilon,\delta}(t)u_0\Vert_{\mathcal{G}_{\beta}^T}&\leq C\sup_{-T<t<T}|t|^\beta\Vert G_{\epsilon,\delta}(t)u_0\Vert_{H^s}\\
&\leq C\sup_{-T<t<T}|t|^\beta
\Vert u_0\Vert_{H^s}<\infty.
\end{align*}
Consequently, Theorem \ref{LocalTheo} guarantee the existence of a
mild solution $u:(-T^*,T^*)\rightarrow
L^{(p(\alpha+1))}(\mathbb{R}^n)$ in $\mathcal{G}_{\beta}^{T^*}.$ On
the other hand, for the same initial data $u_0\in
H^s(\mathbb{R}^n),$ suppose $v\in C([-T_0,T_0];H^s(\mathbb{R}^n))$
the unique energy finite solution
 for some $T_0$ small enough. By the embedding
$H^s\hookrightarrow L^{(p(\alpha+1),\infty)},$ we obtain that $v\in
\mathcal{G}_{\beta}^{T_0}.$ Thus, taking $T_0$ small enough, the
uniqueness of solution given in Theorem \ref{LocalTheo}, implies
that $u=v$ on $[-T_0,T_0]$ and consequently, $u\in
C([-T_0,T_0];H^s).$
\end{enumerate}
\end{remark}

Before to give the proof of Theorem \ref{LocalTheo}, we enunciate a result related to the existence of radial solutions. First of all, we
recall that a solution $u$ in ${\mathcal{G}_{\beta}^T}$ is said to be radially symmetric, or simply radial, for a.e.
$0<\left\vert t\right\vert <T$, if $u(Rx,t)=u(x,t)$ a.e. $x\in\mathbb{R}^{n}$
for all $n\times n$-orthogonal matrix $R$. Then, we have the following corollary.

\begin{corollary}
\label{cor1}
Under the hypotheses of Theorem \ref{LocalTheo},
if the initial data $u_{0}$ is
radially symmetric, then the corresponding solution $u$ is radially symmetric for a.e.
$0<\left\vert t\right\vert <T$.
\end{corollary}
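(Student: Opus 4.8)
The plan is to exploit the rotational invariance of the whole construction: the fixed-point argument of Theorem \ref{LocalTheo} produces the unique solution as a limit of Picard iterates in a closed ball of $\mathcal{G}_{\beta}^{T^*}$, and since each step of that iteration commutes with rotations, the limit inherits the symmetry. First I would introduce, for an $n\times n$ orthogonal matrix $R$, the action $(\rho_R u)(x,t)=u(Rx,t)$, and observe three facts: (a) $\rho_R$ is an isometry of $L^{(p(\alpha+1),\infty)}$, hence of $\mathcal{G}_{\beta}^{T^*}$, because the Lebesgue measure and therefore the distribution function and the decreasing rearrangement are invariant under the measure-preserving change of variables $x\mapsto Rx$; (b) the fourth-order Schr\"odinger group commutes with rotations in the isotropic case, i.e. $G_{\epsilon,\delta}(t)(\rho_R\varphi)=\rho_R\big(G_{\epsilon,\delta}(t)\varphi\big)$, which follows from the fact that the kernel $J_{\epsilon,\delta}(x,t)$ depends on $x$ only through $|x|$ (the symbol $\epsilon|\xi|^2-\delta|\xi|^4$ is radial), so the convolution is unchanged by an orthogonal change of variables; (c) the nonlinearity is pointwise in $x$, so $f(|(\rho_R u)(x,\tau)|)(\rho_R u)(x,\tau)=\big(\rho_R[f(|u|)u]\big)(x,\tau)$, and combining with (b), $\mathcal{F}(\rho_R u)=\rho_R\,\mathcal{F}(u)$.

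Next I would set up the iteration precisely. Write $\Phi(u)=G_{\epsilon,\delta}(t)u_0+\mathcal{F}(u)$ for the map whose fixed point is the mild solution, and define $u_1=G_{\epsilon,\delta}(t)u_0$, $u_{k+1}=\Phi(u_k)$. From (a)--(c), if $u_0$ is radial, then $\rho_R u_1=\rho_R G_{\epsilon,\delta}(t)u_0=G_{\epsilon,\delta}(t)(\rho_R u_0)=G_{\epsilon,\delta}(t)u_0=u_1$, so $u_1$ is radial; and inductively $\rho_R u_{k+1}=\rho_R\Phi(u_k)=G_{\epsilon,\delta}(t)u_0+\rho_R\mathcal{F}(u_k)=G_{\epsilon,\delta}(t)u_0+\mathcal{F}(\rho_R u_k)=\Phi(u_k)=u_{k+1}$, using the induction hypothesis $\rho_R u_k=u_k$. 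Thus every iterate is radial. Since Theorem \ref{LocalTheo} (via the contraction argument using Lemma \ref{NonEstLoc}) guarantees $u_k\to u$ in the norm of $\mathcal{G}_{\beta}^{T^*}$, and since $\rho_R$ is a (linear) isometry of that space hence continuous, we get $\rho_R u=\lim_k \rho_R u_k=\lim_k u_k=u$, i.e. $u(Rx,t)=u(x,t)$ for a.e. $x$ and a.e. $0<|t|<T^*$.

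The one point requiring a little care — and the only real obstacle — is the passage from ``$\rho_R u=u$ in $\mathcal{G}_{\beta}^{T^*}$'' to the pointwise-a.e. statement in the corollary: equality of Bochner-measurable $L^{(p(\alpha+1),\infty)}$-valued functions gives, for a.e. $t$, equality of $u(\cdot,t)$ and $u(R\,\cdot\,,t)$ as elements of $L^{(p(\alpha+1),\infty)}$, hence a.e. in $x$. Quantifying over a countable dense set of orthogonal matrices $R$ and using continuity of $R\mapsto \rho_R u(\cdot,t)$ on $L^{(p(\alpha+1),\infty)}$ (translation/rotation continuity of the norm, valid since $1<p(\alpha+1)<\infty$) upgrades this to all $R\in O(n)$ simultaneously for a.e. $t$, which is exactly the definition of radial given before the corollary. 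The anisotropic case is excluded here precisely because fact (b) fails: the symbol $\sum_{i=1}^d(\epsilon\xi_i^2-\delta\xi_i^4)+\sum_{i=d+1}^n\epsilon\xi_i^2$ is not rotation-invariant, so the corollary is genuinely an isotropic-dispersion statement, consistent with its appearance under the hypotheses of Theorem \ref{LocalTheo} for $A=\Delta^2$.
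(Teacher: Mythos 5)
Your argument is correct and follows essentially the same route as the paper: show the Picard iterates are radial by induction, using that the isotropic kernel/symbol is rotation-invariant and the nonlinearity acts pointwise, then pass the symmetry to the limit of the contraction scheme. The only cosmetic difference is in the last step, where you use that $\rho_R$ is an isometry of $\mathcal{G}_{\beta}^{T^*}$ (plus a countable-density argument over $O(n)$), while the paper extracts an a.e.\ pointwise convergent subsequence in $x$ for a.e.\ fixed $t$; both close the gap equally well.
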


\begin{proof} (Proof of Theorem \ref{LocalTheo}) The proof of Theorem \ref{LocalTheo} will be obtained as an
application of the Banach fixed point theorem. First notice that, by
hypothesis on the initial data, we have that
\begin{eqnarray*}
\|G_{\epsilon,\delta}(t)u_0\|_{\mathcal{G}_{\beta}^T}:=\sup_{-T<t<T}|t|^{\beta}\|G_{\epsilon,\delta}(t)u_0\|_{(p(\alpha+1),\infty)}\equiv\frac{K}{2}<\infty.
\end{eqnarray*}
We consider the mapping $\Upsilon$ defined by
\begin{eqnarray}
\Upsilon(u(t))=G_{\epsilon,\delta}(t)u_0+i\int_{0}^t
G_{\epsilon,\delta}(t-\tau)f(|u(x,\tau)|)u(x,s)d\tau.
\end{eqnarray}
Then, we will prove that $\Upsilon$ defines a contraction on
 $(B_{K},d)$ where $B_{K}$ denotes the closed ball $\{u\in \mathcal{G}_{\beta}^{T^*}: \|u\|_{\mathcal{G}_{\beta}^{T^*}}\leq
 K\}$ endowed with the complete metric $d(u,v)=
 \|u-v\|_{\mathcal{G}_{\beta}^{T^*}}$ for some $0<T^*\leq T.$
In fact, let us consider $0<T^*\leq T$ such that
$\tilde{C}K^{\alpha}{(T^*)}^{1-\beta(\alpha+1)}<\frac{1}{2}$ where $\tilde{C}$
denotes the constant $C_3$ or $C_4$ in Lemma \ref{NonEstLoc}. Then,
from Lemma \ref{NonEstLoc} with $v=0$ we get
\begin{align*}
\Vert \Upsilon(u)\Vert_{\mathcal{G}_{\beta}^{T^*}} &\leq
\|G_{\epsilon,\delta}(t)u_0\|_{\mathcal{G}_{\beta}^{T^*}}+\|\mathcal{F}(u)\|_{\mathcal{G}_{\beta}^{T^*}}\leq \frac{K}{2}+ \tilde{C}K^{\alpha+1}({T^*})^{1-\beta(\alpha+1)}\\
&\leq  \frac{K}{2}+ \frac{K}{2}=K,
\end{align*}
for all $u\in B_{K}.$ Consequently, $ \Upsilon(B_K)\subset B_K.$ Now
assuming that $u,v\in B_{K},$ from Lemma \ref{NonEstLoc} we obtain
\begin{eqnarray}
\Vert
\Upsilon(u(t))-\Upsilon(v(t))\Vert_{\mathcal{G}_{\beta}^{T^*}}&=&\Vert
\mathcal{F}(u)-\mathcal{F}(v)\Vert_{\mathcal{G}_{\beta}^{T^*}}\nonumber\\
&\leq & 2\tilde{C} K^\alpha ({T^*})^{1-\beta(\alpha+1)}\Vert
u-v\Vert_{\mathcal{G}_{\beta}^{T^*}}.\label{lo1}
\end{eqnarray}
Thus, as
$\tilde{C}K^{\alpha}({T^*})^{1-\beta(\alpha+1)}<\frac{1}{2}$ the map
$\Upsilon$ is a contraction on $(B_K,d).$ Thus, the Banach fixed
point theorem implies the existence of a unique solution $u\in
\mathcal{G}_{\beta}^{T^*}.$ Through standard argument one can prove
that $u(t)\rightarrow u_0$ as $t\rightarrow0,$ in the sense of
distributions \cite{LucEldPab}. On the other hand, in order to prove the local
Lipschitz continuity of the data-solution map, we consider $u,v$ two
local mild solutions with initial data $u_0, v_0,$ respectively.
Then, as in estimate (\ref{lo1}) we get
\begin{eqnarray}
\Vert u-v\Vert_{\mathcal{G}_{\beta}^{T^*}} &= & \Vert
G_{\epsilon,\delta}(t)(u_0-v_0)\Vert_{\mathcal{G}_{\beta}^{T^*}}+\Vert
\mathcal{F}(u)-\mathcal{F}(v)\Vert_{\mathcal{G}_{\beta}^{T^*}}\nonumber\\
&\leq & \Vert
G_{\epsilon,\delta}(t)(u_0-v_0)\Vert_{\mathcal{G}_{\beta}^{T^*}}+ 2\tilde{C}
K^\alpha ({T^*})^{1-\beta(\alpha+1)}\Vert
u-v\Vert_{\mathcal{G}_{\beta}^{T^*}}.\nonumber
\end{eqnarray}
Since $2\tilde{C} K^\alpha ({T^*})^{1-\beta(\alpha+1)}<1,$ from last inequality
the local Lipschitz continuity of the data-solution map holds.
\end{proof}

\textbf{Proof of Corollary \ref{cor1}}  From the fixed point argument used in the
proof of Theorem \ref{LocalTheo}, we can see that the local solution $u$ as the
limit in $\mathcal{G}^T_{\beta}$ of the Picard
sequence
\begin{eqnarray}
 u_{1}   =G_{\epsilon, \delta}(t)(u_{0}){,}\  \  \ u_{k+1} =u_{1}+\mathcal{F}(
u_{k}),\  k\in  \mathbb{N}.\text{ } \label{sequencee}
\end{eqnarray}
Since the symbol of the group
$G_{\epsilon,\delta}(t)$ is radially symmetric for each
fixed $0<t<T,$ it follows that $G_{\epsilon,\delta}(t)u_{0}$ is radial, provided that $u_{0}$ is radial. Furthermore,
since the nonlinear term $\mathcal{F}(u)$ is radial when
$u$ are radial, an induction argument gives that the sequence $\{u_{k}\}_{k\in \mathbb{N}}%
$ given in (\ref{sequencee}) is radial. Since pointwise convergence
preserves radial symmetry, and $\mathcal{G}_{\beta}^{T}$ implies (up
to a subsequence) almost everywhere pointwise convergence in the
variable $x,$ for a.e. fixed $t\neq0$, it follows that $u(x,t)$ is
radially symmetric. $\Box$

\subsection{Global-in-time solutions}
\begin{theorem}[Global-in-time solutions]
\label{GlobalTheo} Let $1\leq \alpha<\infty $  and  assume that
$(\alpha+1)\sigma<1.$ Consider either
$\frac{n\alpha}{4(\alpha+2)}<1$ if $A=\Delta^2,$ or
$\frac{(2n-d)\alpha}{4(\alpha+2)}<1$ if
$A=\sum_{i=1}^d\partial_{x_ix_ix_ix_i}.$ Suppose further that  $
\xi>0$ and $M>0$ satisfy the inequality
$\xi+\widetilde{C}M^{\alpha+1}\leq M$ where
$\widetilde{C}=\widetilde{C}(\alpha, n)$ is the constant $C_1$ or $C_2$ in Lemma
\ref{EstNonGlo}. If $u_0\in \mathcal{D}_\sigma,$ with
$\sup_{t>0}t^\sigma\Vert
G_{\epsilon,\delta}(t)u_0\Vert_{(\alpha+2,\infty)}<\xi,$ then
the initial value problem (\ref{FoSch}) has a unique global-in-time
mild solution $u\in \mathcal{G}^{\infty}_{\sigma}$ with $\Vert
u\Vert_{\mathcal{G}^{\infty}_\sigma}\leq M,$ such that
 $\lim_{t \rightarrow 0} u(t)=u_0$ in distribution sense. Moreover, if $u,v$ are two
global mild solutions with respective initial data $u_0,v_0,$ then
\begin{eqnarray}
\Vert u-v\Vert_{\mathcal{G}^{\infty}_\sigma}\leq C \Vert
G_{\epsilon,\delta}(t)(u_0-v_0)\Vert_{\mathcal{G}^{\infty}_\sigma}.
\end{eqnarray}
Additionally, if $G_{\epsilon,\delta}(t)(u_0-v_0)$ verifies the
stronger decay
\[\sup_{t>0}\vert t\vert^\sigma(1+\vert t\vert)^\varsigma\Vert G_{\epsilon}(t)(u_0-v_0)\Vert_{(\alpha+2,\infty )}<\infty,\]
 for some $\varsigma>0$ such that $\sigma(\alpha+1)+\varsigma<1,$ then
\begin{align}\label{stronger}
\sup_{t>0}\vert t\vert^\sigma(1+\vert t\vert)^\varsigma&\Vert u(t)-v(t)\Vert_{(\alpha+2,\infty )}\leq C\sup_{t>0}\vert t\vert^\sigma(1+\vert t\vert)^\varsigma \Vert G_{\epsilon}(t)(u_0-v_0)\Vert_{(\alpha+2,\infty )}.
\end{align}
\end{theorem}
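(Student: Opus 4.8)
The plan is to realize the solution as a fixed point of the map $\Upsilon(u)(t)=G_{\epsilon,\delta}(t)u_0+\mathcal{F}(u)(t)$ on the closed ball $B_M=\{u\in\mathcal{G}^{\infty}_{\sigma}:\|u\|_{\mathcal{G}^{\infty}_{\sigma}}\leq M\}$ endowed with the metric induced by $\|\cdot\|_{\mathcal{G}^{\infty}_{\sigma}}$, exactly in the spirit of the proof of Theorem \ref{LocalTheo} but using the time-global estimates of Lemmas \ref{LinEstGlo} and \ref{EstNonGlo} in place of the local ones. First I would observe that the hypothesis on $u_0$ gives $\|G_{\epsilon,\delta}(t)u_0\|_{\mathcal{G}^{\infty}_{\sigma}}<\xi$, while Lemma \ref{EstNonGlo} with $v=0$ gives $\|\mathcal{F}(u)\|_{\mathcal{G}^{\infty}_{\sigma}}\leq\widetilde{C}\|u\|_{\mathcal{G}^{\infty}_{\sigma}}^{\alpha+1}$; hence for $u\in B_M$ one has $\|\Upsilon(u)\|_{\mathcal{G}^{\infty}_{\sigma}}\leq\xi+\widetilde{C}M^{\alpha+1}\leq M$, so $\Upsilon$ maps $B_M$ into itself. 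Next, Lemma \ref{EstNonGlo} applied to two elements of $B_M$ yields $\|\mathcal{F}(u)-\mathcal{F}(v)\|_{\mathcal{G}^{\infty}_{\sigma}}\leq 2\widetilde{C}M^{\alpha}\|u-v\|_{\mathcal{G}^{\infty}_{\sigma}}$; choosing $M$ as the smallest positive root of $\xi+\widetilde{C}M^{\alpha+1}=M$ (and decreasing $\xi$ slightly if necessary, which only strengthens the hypothesis) one may assume $2\widetilde{C}M^{\alpha}<1$, so $\Upsilon$ is a strict contraction and the Banach fixed point theorem produces a unique $u\in B_M$ with $\Upsilon(u)=u$; this $u$ is the desired global mild solution.

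For the convergence $u(t)\to u_0$ in $\mathcal{S}'(\mathbb{R}^n)$ as $t\to0$ I would treat the two terms of the integral equation separately. The linear part satisfies $G_{\epsilon,\delta}(t)u_0\to u_0$ in $\mathcal{S}'$ because on the Fourier side the multiplier ($e^{-it(\epsilon|\xi|^2-\delta|\xi|^4)}$ in the isotropic case, the corresponding product in the anisotropic one) tends to $1$ pointwise and stays bounded, so $\langle G_{\epsilon,\delta}(t)u_0,\phi\rangle\to\langle u_0,\phi\rangle$ for every $\phi\in\mathcal{S}$ by dominated convergence; this is the standard argument of \cite{LucEldPab}. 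For the nonlinear part, pairing $\mathcal{F}(u)(t)$ with $\phi\in\mathcal{S}$, moving $G_{\epsilon,\delta}(t-\tau)$ onto $\phi$ by transposition and applying Hölder's inequality in Lorentz spaces, $|\langle\mathcal{F}(u)(t),\phi\rangle|\leq C\int_0^{|t|}\|(f(|u|)u)(\tau)\|_{(p,\infty)}\,\|G_{\epsilon,\delta}(t-\tau)^{T}\phi\|_{(p',1)}\,d\tau$ with $p'=\alpha+2$; since $\|(f(|u|)u)(\tau)\|_{(p,\infty)}\leq C\|u(\tau)\|_{(\alpha+2,\infty)}^{\alpha+1}\leq CM^{\alpha+1}|\tau|^{-\sigma(\alpha+1)}$ (using $|f(x)|\leq C_f|x|^{\alpha}$) and $\|G_{\epsilon,\delta}(s)^{T}\phi\|_{(p',1)}$ is bounded for $s$ in a bounded interval (the Schwartz seminorms of $G_{\epsilon,\delta}(s)^{T}\phi$ stay bounded there), the integral is $O(|t|^{1-\sigma(\alpha+1)})\to0$ because $\sigma(\alpha+1)<1$.

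The Lipschitz estimate and the weighted bound (\ref{stronger}) both come from $u-v=G_{\epsilon,\delta}(t)(u_0-v_0)+(\mathcal{F}(u)-\mathcal{F}(v))$. For the $\mathcal{G}^{\infty}_{\sigma}$ estimate, Lemma \ref{EstNonGlo} gives $\|\mathcal{F}(u)-\mathcal{F}(v)\|_{\mathcal{G}^{\infty}_{\sigma}}\leq 2\widetilde{C}M^{\alpha}\|u-v\|_{\mathcal{G}^{\infty}_{\sigma}}$, and absorbing this term (using $2\widetilde{C}M^{\alpha}<1$) yields $\|u-v\|_{\mathcal{G}^{\infty}_{\sigma}}\leq(1-2\widetilde{C}M^{\alpha})^{-1}\|G_{\epsilon,\delta}(t)(u_0-v_0)\|_{\mathcal{G}^{\infty}_{\sigma}}$; in particular $u_0=v_0$ forces $u=v$. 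For (\ref{stronger}) I would introduce the norm $\|w\|_{*}=\sup_{t>0}|t|^{\sigma}(1+|t|)^{\varsigma}\|w(t)\|_{(\alpha+2,\infty)}$ and rerun the computation of Lemma \ref{EstNonGlo} carrying the extra weight: by Lemma \ref{LinEstGlo}, the hypothesis on $f$ and Hölder,
\begin{align*}
&|t|^{\sigma}(1+|t|)^{\varsigma}\|\mathcal{F}(u)(t)-\mathcal{F}(v)(t)\|_{(\alpha+2,\infty)}\\
&\qquad\leq CM^{\alpha}\|u-v\|_{*}\,|t|^{\sigma}(1+|t|)^{\varsigma}\int_0^{|t|}(|t|-\tau)^{-\frac{n\alpha}{4(\alpha+2)}}\tau^{-\sigma(\alpha+1)}(1+\tau)^{-\varsigma}\,d\tau,
\end{align*}
and the crux is to bound the last factor uniformly in $t>0$. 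I would do this by splitting $\int_0^{|t|}=\int_0^{|t|/2}+\int_{|t|/2}^{|t|}$: on $[0,|t|/2]$ estimate $(|t|-\tau)^{-\frac{n\alpha}{4(\alpha+2)}}$ by $(|t|/2)^{-\frac{n\alpha}{4(\alpha+2)}}$ and use $\sigma(\alpha+1)<1$ together with $\sigma(\alpha+1)+\varsigma<1$ to control $\int_0^{|t|/2}\tau^{-\sigma(\alpha+1)}(1+\tau)^{-\varsigma}\,d\tau$; on $[|t|/2,|t|]$ bound $\tau^{-\sigma(\alpha+1)}(1+\tau)^{-\varsigma}$ by its value at $\tau=|t|/2$ and integrate the integrable singularity $(|t|-\tau)^{-\frac{n\alpha}{4(\alpha+2)}}$. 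Invoking the identity $1-\frac{n\alpha}{4(\alpha+2)}-\sigma\alpha=0$ — which is precisely how $\sigma$ was chosen — one checks that in every regime (small and large $|t|$) the powers of $|t|$ and of $(1+|t|)$ cancel, so the factor is $O(1)$; hence $\|\mathcal{F}(u)-\mathcal{F}(v)\|_{*}\leq CM^{\alpha}\|u-v\|_{*}$, and after shrinking $M$ so that this constant is $<1$, absorption yields (\ref{stronger}). To make the absorption rigorous (since $\|u-v\|_{*}$ is not known a priori to be finite) I would first run the argument with $\sup_{0<t<T}$ in place of $\sup_{t>0}$, which is finite for each $T$ because $(1+t)^{\varsigma}$ is bounded on $(0,T)$ and $u-v\in\mathcal{G}^{\infty}_{\sigma}$, obtain a bound independent of $T$, and then let $T\to\infty$.

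The main technical obstacle is this last weighted time-integral estimate: one must track precisely the interplay between the scaling exponent $\sigma$ (fixed by the biharmonic scaling), the additional decay rate $\varsigma$, and the Lorentz-space singularity $\frac{n\alpha}{4(\alpha+2)}$ of $G_{\epsilon,\delta}(t)$, and in particular verify uniformity over all $t\in(0,\infty)$ — the delicate point being the large-$t$ regime, where the slowly decaying factor $(1+\tau)^{-\varsigma}$ (with $\sigma(\alpha+1)+\varsigma<1$) is not integrable at infinity and must be balanced against $(|t|-\tau)^{-\frac{n\alpha}{4(\alpha+2)}}$ by hand. Everything else — the self-mapping and contraction properties and the convergence to the initial datum — is routine once Lemmas \ref{LinEstGlo} and \ref{EstNonGlo} are available, and the anisotropic case is identical with $n$ replaced by $2n-d$ throughout.
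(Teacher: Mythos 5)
Your proposal is correct and follows essentially the same route as the paper: a Banach fixed-point argument on $B_M$ using Lemmas \ref{LinEstGlo} and \ref{EstNonGlo}, absorption (with the smallness coming from $\xi+\widetilde{C}M^{\alpha+1}\leq M$, adjusting $M$ if needed) for the Lipschitz estimate, and a weighted absorption argument, again for $M$ small, for (\ref{stronger}). The only difference is technical and minor: to get the uniform-in-$t$ bound on the weighted time integral the paper substitutes $\tau\mapsto t\tau$ and uses $(1+t)^\varsigma(1+t\tau)^{-\varsigma}\leq \tau^{-\varsigma}$ on $[0,1]$ to reduce everything to the Beta-type integral $\int_0^1(1-\tau)^{-\frac{n\alpha}{4(\alpha+2)}}\tau^{-\sigma(\alpha+1)-\varsigma}\,d\tau$, whereas you split the integral at $|t|/2$; both computations close under $\sigma(\alpha+1)+\varsigma<1$ and $\frac{n\alpha}{4(\alpha+2)}<1$.
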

\begin{remark}
\begin{enumerate}
\item[(i)] (Regularity) In addition to the assumptions of Theorem \ref{GlobalTheo}, if we consider that the
initial data verifies
\[\sup_{-\infty<t<\infty}t^\sigma \Vert G_{0,\delta}(t)u_0\Vert_{(\alpha+2,d)}<\infty\]
for some $1\leq d <\infty,$ then there exists $\xi_0$ such that if
\[\sup_{-\infty<t<\infty}t^\sigma \Vert G_{0,\delta}(t)u_0\Vert_{(\alpha+2,d)}\leq \xi_0,\]
then global
solution provided in Theorem \ref{GlobalTheo} satisfies that
\[\sup_{-\infty<t<\infty}t^\sigma \Vert u(t)\Vert_{(\alpha+2,d)}< \infty.\]
\item[(ii)] (Radial solutions) As in Corollary \ref{cor1}, if the initial data $u_0$ is radially symmetric, then
the global-in-time solution $u$ is radially symmetric for a.e.
$t\neq0.$
\item[(iii)] (Asymptotic stability) Following the proof of (\ref{stronger}) we can obtain that if $u,v$ are global mild solutions of  the Cauchy problem (\ref{FoSch}) given by Theorem \ref{GlobalTheo}, with initial data $u_0,v_0\in \mathcal{D}_\sigma$ respectively, satisfying
\[\lim_{t \rightarrow \infty} t^\sigma(1+ t)^\varsigma \Vert
G_{\epsilon}(t)(u_0-v_0)\Vert_{(\alpha+2,\infty )}=0,\]
then $\lim_{t \rightarrow \infty}t^\sigma(1+ t)^\varsigma \Vert
u(t)-v(t)\Vert_{(\alpha+2,\infty )}=0.$
\item[(iv)] (Biharmonic and anisotropic biharmonic global solutions) Theorem
\ref{GlobalTheo} gives existence of global mild solution for Cauchy
problem associated to equation (\ref{FoNLS}) in the class $
\mathcal{G}^{\infty}_{\sigma}$. The proof was based on the time-decay estimate of the group $G_{0,\delta}(t)$ given in Lemma
\ref{LinEstGlo}. However, taking into account that if $\epsilon=0$
the time-decay estimate in Lemma \ref{LinEstLoc} holds true for all
$t\neq 0,$ we are able to prove the existence of global-in-time mild
solutions for the Cauchy problem associated to equation
(\ref{FoNLS}) in the class $G^\infty_{\sigma(p)}$ defined as the set
of Bochner measurable functions $u:(-\infty ,\infty )\rightarrow
L^{(p(\alpha+1),\infty )}$ such that
\[
\| u\|_{\mathcal{G}^{\infty}_{\sigma(p)}}=\sup_{-\infty
<t<\infty}|t|^{\sigma(p)}\| u(t)\| _{(p(\alpha+1),\infty )}<\infty,
\]
where $\sigma(p)$ is  given by
\begin{equation}\label{Defsigmabb}
\sigma(p)=\left\{
\begin{aligned}
&\frac{1}{\alpha}-\frac{n}{4p(\alpha+1)},\ \mbox{if}\ A=\Delta^2,\\
&\frac{1}{\alpha}-\frac{2n-d}{4p(\alpha+1)},\ \mbox{if}\
A=\sum_{i=1}^d\partial_{x_ix_ix_ix_i}.
\end{aligned}
\right.
\end{equation}
Here $p,\alpha$ must verify $1\leq \alpha<\infty,$
$(1/p,1/{(\alpha+1)p})\in\Xi_0\setminus\partial\Xi_0$ and
$\frac{4p}{n\alpha}<1<\frac{4p(\alpha+1)}{n\alpha}$ if $A=\Delta^2$
or, $\frac{4p}{(2n-d)\alpha}<1<\frac{4p(\alpha+1)}{(2n-d)\alpha}$ if
$A=\sum_{i=1}^d\partial_{x_ix_ix_ix_i.}$
\end{enumerate}
\end{remark}
\begin{corollary}\label{self}(Biharmonic and anisotropic biharmonic self-similar solutions).
Let $\epsilon=0,$ $1\leq \alpha<\infty $  and  assume that
$(\alpha+1)\sigma<1.$ Consider either
$\frac{n\alpha}{4(\alpha+2)}<1$  if $A=\Delta^2,$ or
$\frac{(2n-d)\alpha}{4(\alpha+2)}<1$ if
$A=\sum_{i=1}^d\partial_{x_ix_ix_ix_i}.$ Assume that the initial data $u_0$ is  a homogeneous function of degree $\frac{-4}{\alpha}.$ Then the solution $u(t, x)$ provided by Theorem \ref{GlobalTheo} is self-similar, that is, $u(t, x) = \lambda^{\frac{4}{\alpha}}
 u(\lambda^4 t, \lambda x)$ for all $\lambda > 0$, almost everywhere for $x\in \mathbb{R}^n$ and $t>0.$
\end{corollary}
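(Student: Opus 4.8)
The plan is to exploit the uniqueness statement of Theorem \ref{GlobalTheo} together with the scaling invariance of the biharmonic (or anisotropic biharmonic) equation with $\epsilon=0$ and $f(|u|)=|u|^\alpha$. First I would fix $\lambda>0$ and set $u_\lambda(x,t)=\lambda^{4/\alpha}u(\lambda x,\lambda^4 t)$. A direct computation shows that if $u$ solves the integral equation \eqref{IntEqu} with $\epsilon=0$, then $u_\lambda$ solves the same integral equation with initial datum $(u_0)_\lambda(x)=\lambda^{4/\alpha}u_0(\lambda x)$; this uses only that the group $G_{0,\delta}(t)$ has the scaling property $G_{0,\delta}(t)[\varphi(\lambda\,\cdot)](x)=\big(G_{0,\delta}(\lambda^4 t)\varphi\big)(\lambda x)$, which is immediate from the explicit Fourier-multiplier form of $J_{0,\delta}$ (resp. $I_{0,\delta}$) after the change of variables $\xi\mapsto\lambda^{-1}\xi$ in the defining integral, and the homogeneity $|u|^\alpha u$ of the nonlinearity. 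Since $u_0$ is homogeneous of degree $-4/\alpha$, we have $(u_0)_\lambda=u_0$, so $u_\lambda$ and $u$ solve \eqref{IntEqu} with the \emph{same} initial datum.

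Next I would check that $u_\lambda$ lies in the same ball of $\mathcal{G}^\infty_\sigma$ as $u$. The exponent $\sigma$ in \eqref{Defsigma} was precisely chosen so that the norm $\|\cdot\|_{\mathcal{G}^\infty_\sigma}$ is invariant under the scaling $v\mapsto v_\lambda$: indeed $\|v_\lambda(\cdot,t)\|_{(\alpha+2,\infty)}=\lambda^{4/\alpha-n/(\alpha+2)}\|v(\cdot,\lambda^4 t)\|_{(\alpha+2,\infty)}$ by the scaling of the weak-$L^{\alpha+2}$ norm, and after multiplying by $|t|^\sigma$ and substituting $s=\lambda^4 t$ one gets $\|v_\lambda\|_{\mathcal{G}^\infty_\sigma}=\|v\|_{\mathcal{G}^\infty_\sigma}$ because $4\sigma=4/\alpha-n/(\alpha+2)$ (resp. $4\sigma=4/\alpha-(2n-d)/(\alpha+2)$ in the anisotropic case) — this is exactly the observation recorded after \eqref{Defsigma}. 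In particular $\|u_\lambda\|_{\mathcal{G}^\infty_\sigma}=\|u\|_{\mathcal{G}^\infty_\sigma}\le M$, and likewise the smallness of the linear evolution of the datum is preserved, so $u_\lambda$ is an admissible solution in the sense of Theorem \ref{GlobalTheo}.

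Finally, the uniqueness part of Theorem \ref{GlobalTheo} (uniqueness in the ball $\{v:\|v\|_{\mathcal{G}^\infty_\sigma}\le M\}$) forces $u_\lambda=u$ for every $\lambda>0$, i.e. $u(x,t)=\lambda^{4/\alpha}u(\lambda x,\lambda^4 t)$ almost everywhere, which is the claimed self-similarity. I would also remark in passing that $u_0$ homogeneous of degree $-4/\alpha$ does belong to $\mathcal{D}_\sigma$ with an arbitrarily small constant once $u_0$ is taken small enough, since such $u_0\in L^{(n\alpha/4,\infty)}$ and the linear estimate of Lemma \ref{LinEstGlo} together with the scaling invariance of the norm gives $\sup_{t>0}t^\sigma\|G_{0,\delta}(t)u_0\|_{(\alpha+2,\infty)}\le C\|u_0\|_{(n\alpha/4,\infty)}$, so the hypotheses of Theorem \ref{GlobalTheo} are genuinely satisfiable. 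The only mild subtlety — the "main obstacle," such as it is — is bookkeeping the scaling exponents so that the power of $\lambda$ in the $\mathcal{G}^\infty_\sigma$-norm cancels exactly; everything else is a direct consequence of uniqueness.
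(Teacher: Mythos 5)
Your argument is correct in the isotropic case, but it takes a genuinely different route from the paper. The paper proves the corollary by going back inside the fixed-point construction: it writes the solution as the limit in $\mathcal{G}^{\infty}_{\sigma}$ of the Picard iterates $u_{1}=G_{0,\delta}(t)u_{0}$, $u_{k+1}=u_{1}+\mathcal{F}(u_{k})$, shows $u_{1}$ is invariant under the scaling $u\mapsto\lambda^{4/\alpha}u(\lambda x,\lambda^{4}t)$ because $u_{0}$ is homogeneous and $G_{0,\delta}$ commutes with dilations (exactly the commutation identity you use), observes that $\mathcal{F}$ preserves this invariance, and concludes by induction and passage to the limit. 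You instead observe that $u_{\lambda}$ solves the same integral equation with the same datum and the same norm bound, and invoke the uniqueness of the fixed point in the ball $B_{M}$. Both proofs rest on the same two ingredients (the scaling relation for $G_{0,\delta}$ with $\epsilon=0$ and the homogeneity of the data and nonlinearity); your uniqueness argument is arguably cleaner, but it genuinely needs the exact scale invariance of the $\mathcal{G}^{\infty}_{\sigma}$ norm so that $u_{\lambda}$ stays in the ball where uniqueness holds, whereas the paper's iteration argument only needs that convergence in $\mathcal{G}^{\infty}_{\sigma}$ passes a.e.\ identities to the limit.

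This is where the one real gap in your write-up sits: the claim that in the anisotropic case the norm is invariant because $4\sigma=4/\alpha-(2n-d)/(\alpha+2)$. The dilation $x\mapsto\lambda x$ scales the weak-$L^{\alpha+2}$ norm by $\lambda^{-n/(\alpha+2)}$ irrespective of $d$, so invariance of $\sup_{t}|t|^{\sigma}\Vert\cdot\Vert_{(\alpha+2,\infty)}$ under $u\mapsto\lambda^{4/\alpha}u(\lambda x,\lambda^{4}t)$ forces $4\sigma=4/\alpha-n/(\alpha+2)$, which is the isotropic value; with the anisotropic $\sigma$ of (\ref{Defsigma}) one gets $\Vert u_{\lambda}\Vert_{\mathcal{G}^{\infty}_{\sigma}}=\lambda^{(n-d)/(\alpha+2)}\Vert u\Vert_{\mathcal{G}^{\infty}_{\sigma}}$, and the ``same ball'' step of your uniqueness argument breaks down for $d<n$. (In fact, for $d<n$ a nonzero datum homogeneous of degree $-4/\alpha$ gives $t^{\sigma}\Vert G_{0,\delta}(t)u_{0}\Vert_{(\alpha+2,\infty)}=c\,t^{(d-n)/(4(\alpha+2))}$, unbounded as $t\to0^{+}$, so the anisotropic case of the corollary is delicate already at the level of $\mathcal{D}_{\sigma}$; the paper's proof glosses this same point by asserting scale invariance of the norm.) Finally, a small aside: your closing remark invokes Lemma \ref{LinEstGlo}, which only maps between dual exponents $L^{(p,\infty)}\to L^{(p',\infty)}$, so it does not directly yield $\sup_{t>0}t^{\sigma}\Vert G_{0,\delta}(t)u_{0}\Vert_{(\alpha+2,\infty)}\leq C\Vert u_{0}\Vert_{(n\alpha/4,\infty)}$; the appropriate tool is Lemma \ref{LinEstLoc} with $\epsilon=0$ (valid for all $t\neq0$) or the reference the paper cites, though this point is incidental since membership of $u_{0}$ in $\mathcal{D}_{\sigma}$ is part of the hypotheses inherited from Theorem \ref{GlobalTheo}.
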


\begin{remark}
 An admissible class of initial data for the existence of self-similar solutions in Corollary \ref{self} is given by
 the set of functions $u_0(x)=P_m(x)\vert x\vert^{-m-\frac{4}{\alpha}}$ where $P_m(x)$ is a homogeneous polynomial of degree $m$.
\end{remark}

\begin{proof} The proof of Theorem \ref{GlobalTheo} will be also obtained
as an application of the Banach fixed point Theorem. We denote by $B_{M}$
the set of $u\in\mathcal{G}^{\infty}_{\sigma}$ such that
\[\Vert u \Vert_{\mathcal{G}^\infty_\sigma}\equiv\sup_{-\infty<t<\infty}|t|^\sigma\| u(t)\|_{(\alpha+2,\infty )}\leq M,\]
endowed with the complete metric $d(u,v)=\sup_{-\infty<t<\infty}|t|^\sigma\Vert u(t)-v(t)\Vert_{(\alpha+2,\infty )}.$ We
will show that the mapping $\Upsilon$ defined by
\begin{eqnarray}
\Upsilon(u(t))=G_{\epsilon,\delta}(t)u_0+i\int_{0}^t
G_{\epsilon,\delta}(t-\tau)f(|u(x,\tau)|)u(x,s)d\tau,
\end{eqnarray}
is a contraction on $(B_{M},d).$ From the assumptions on the initial
data and Lemma \ref{EstNonGlo} (with $v=0$), we have (for all $u\in B_M$)
\begin{align}
\Vert \Upsilon(u)\Vert_{\mathcal{G}^\infty_\sigma} & \leq  \Vert G_{\epsilon,\delta}(t)u_0\Vert_{\mathcal{G}^\infty_\sigma}+\Vert \mathcal{F}(u)\Vert_{\mathcal{G}^\infty_\sigma} \leq  \xi + \tilde{C}\Vert u\Vert^{\alpha+1}_{\mathcal{G}^\infty_\sigma}\nonumber\\
& \leq  \xi +\tilde{C} M^{\alpha+1}\leq M,
\end{align}
because $M$ and $\xi$ verify $\xi+\tilde{C} M^{\alpha+1}\leq M$.
Thus, $\Upsilon$ maps $B_M$ itself. On the other hand, Lemma \ref{EstNonGlo}, we get
\begin{equation}
\Vert \Upsilon(u)-\Upsilon(v)\Vert_{\mathcal{G}^\infty_\sigma} \leq \Vert \mathcal{F}(u)-\mathcal{F}(v)\Vert_{\mathcal{G}^\infty_{\sigma}} \leq 2\tilde{C}M^\alpha \Vert u-v\Vert_{\mathcal{G}^\infty_{\sigma}}.\label{f1}
\end{equation}
Since $\tilde{C}M^{\alpha}<1,$ it follows that $\Upsilon$ is a contraction
on $(B_{M},d)$  and consequently, the Banach fixed point theorem implies the existence of a unique global solution $u\in \mathcal{G}^\infty_\sigma.$ In order to prove the continuous dependence of the mild
solutions with respect to the initial data, it suffices to observe
that (\ref{f1}) implies that
\begin{equation*}
\Vert u-v\Vert_{\mathcal{G}^\infty_{\sigma}}\leq \Vert G_{\epsilon,\delta}(t)u_0-G_{\epsilon,\delta}(t)v_0\Vert_{\mathcal{G}_{\sigma}}+CM^\alpha \Vert u-v\Vert_{\mathcal{G}^\infty_{\sigma}}.
\end{equation*}
Thus, as $\tilde{C}M^\alpha<1,$ then $\Vert u-v\Vert_{\mathcal{G}^\infty_{\sigma}}\leq C\Vert
G_{\epsilon,\delta}(t)u_0-G_{\epsilon,\delta}(t)v_0\Vert_{\mathcal{G}^\infty_{\sigma}}.$
Finally, in order to prove the stronger decay, notice that
\begin{align}
 t^\sigma(1+ t)^\varsigma\Vert u(t)-v(t)\Vert_{(\alpha+2,\infty )} & \leq  C\sup_{t>0} t^\sigma(1+ t)^\varsigma \Vert G_{\epsilon,\delta}(t)(u_0-v_0)\Vert_{(\alpha+2,\infty )}\nonumber\\
&+  t^\sigma(1+ t)^\varsigma \Vert \mathcal{F}(u)-\mathcal{F}(v)\Vert_{(\alpha+2,\infty )}.\label{stn1}
\end{align}
Since $\Vert u \Vert_{\mathcal{G}^\infty_\sigma}, \Vert v
\Vert_{\mathcal{G}^\infty_\sigma}\leq M,$ using the change of
variable $\tau\mapsto \tau t$ and noting that
$(1+t)^\varsigma(1+t\tau)^{-\varsigma}\leq t^\varsigma
(t\tau)^{-\varsigma}$ for $\tau\in [0,1],$ we obtain
\begin{align}
& t^\sigma(1+ t)^\varsigma\Vert \mathcal{F}(u)-\mathcal{F}(v)\Vert_{(\alpha+2,\infty )}\leq
t^\sigma(1+t)^\varsigma\int_0^t(t-\tau)^{-\frac{n\alpha}{4(\alpha+2)}}\tau^{-\sigma(\alpha+1)}(1+\tau)^\varsigma\nonumber\\
&\times (\tau^\sigma(1+\tau)^\varsigma\Vert u(\tau)-v(\tau)\Vert_{(\alpha+2,\infty )})\left[ \tau^\sigma\Vert u(\tau)\Vert_{(\alpha+2,\infty )}^{\alpha}+\tau^\sigma\Vert v(\tau) \Vert_{(\alpha+2,\infty)}^{\alpha}\right] ds\nonumber\\
&\leq 2M^\alpha\int_0^1 (1-\tau)^{-\frac{n\alpha}{4(\alpha+2)}}\tau^{-\sigma(\alpha+1)} (1+t)^\varsigma  (1+ t\tau)^{-\varsigma}
((t\tau)^\sigma(1+(t\tau))^\varsigma\nonumber\\
&\times \Vert u(t\tau)-v(t\tau)\Vert_{(\alpha+2,\infty )})ds\nonumber\\
&\leq 2M^\alpha\int_0^1(1-\tau)^{-\frac{n\alpha}{4(\alpha+2)}}\tau^{-\sigma(\alpha+1)} \tau^{-\varsigma} ((t\tau)^\sigma(1+(t\tau))^\varsigma\Vert u(t\tau)-v(t\tau)\Vert _{(\alpha+2,\infty )})d\tau.\label{st2}
\end{align}
Therefore, by denoting $A=\sup_{t>0}t^\sigma(1+ t)^\varsigma\Vert
u(t)-v(t)\Vert_{(\alpha+2,\infty )},$ from (\ref{stn1}) and
(\ref{st2}) we get
\begin{align*}
A\leq C&\sup_{t>0} t^\sigma(1+ t)^\varsigma \Vert G_{\epsilon,\delta}(t)(u_0-v_0)\Vert_{(\alpha+2,\infty )}\\
&+\left (2M^\alpha\int_0^1(1-\tau)^{-\frac{n\alpha}{4(\alpha+2)}}\tau^{-\sigma(\alpha+1)} \tau^{-\varsigma} d\tau\right) A.
\end{align*}
Choosing $M$ small enough such that $2M^\alpha\int_0^1
(1-\tau)^{-\frac{n\alpha}{4(\alpha+2)}}\tau^{-\sigma(\alpha+1)}
\tau^{-\varsigma} d\tau<1,$ we conclude the proof.
\end{proof}

\textbf{Proof of Corollary \ref{self}} We recall that due the fixed point argument used in the
proof of Theorem \ref{GlobalTheo}, the solution $u$ is the
limit in $\mathcal{G}_{\sigma}^{\infty}$ of the Picard
sequence
\begin{equation}
u_{1}  =G_{0,\delta}(t)u_{0}{,}\ \ u_{k+1} =u_{1}+\mathcal{F}(u_{k}),\  k\in \mathbb{N}.\text{ } \label{sequencee1}
\end{equation}
Notice that the initial data $u_0$ satisfying $
 u_{0}(\lambda x)=\lambda
^{-\frac{4}{\alpha}}u_{0}(x) $ belongs to the class
$\mathcal{D}_\sigma$ (see Corollary 2.6 in \cite{LucEldPab}).
Since $\epsilon=0,$ we can obtain
\begin{equation}
u_{1}(\lambda x,\lambda^{4}t)=\lambda^{-\frac{4}{\alpha}}u_{1}(x,t) \label{aux-scal2}
\end{equation}
and then $u_{1}$ is invariant by the scaling
\begin{equation}\label{sc}
u(x,t)\rightarrow u_\lambda(x,t):=\lambda^{\frac{4}{\lambda}}u(\lambda x,\lambda^4 t),\ \lambda>0.
\end{equation}
 Moreover, the nonlinear term
$\mathcal{F}(u)$ is invariant by scaling (\ref{sc}) when $u$ is
also. Therefore, we can employ an induction argument in order to
obtain that all elements $u_{k}$ have the scaling invariance
property (\ref{sc}). Because the norm of $\mathcal{G}_{\alpha
}^{\infty}$ is scaling invariant, we get that the limit $u$ also is
invariant by the scaling transformation $u\rightarrow u_\lambda$, as
required.\ \hfill$\square $\vspace{6pt}\medskip

\section{Vanishing dispersion limit}
This section is devoted to the analysis of the solutions of (\ref{FoSch}) as the second order dispersion
vanishes. More exactly, we study the convergence, $\epsilon\rightarrow 0,$ of the solutions 
of the Cauchy problem
\begin{equation}\label{FoSchedl}
\left\{
\begin{array}{lc}
i\partial _{t}u+\epsilon \Delta u+\delta A u+\lambda |u|^\alpha u=0, & x\in \mathbb{R}^{n},\ \ t\in \mathbb{R}, \\
u(x,0)=u_{0}(x), & x\in \mathbb{R}^{n}, \\
\end{array}
\right.
\end{equation}
to the solutions of
\begin{equation}\label{FoSche=0}
\left\{
\begin{array}{lc}
i\partial _{t}u+\delta A u+\lambda|u|^\alpha u=0, & x\in \mathbb{R}^{n},\ \ t\in \mathbb{R}, \\
u(x,0)=u_{0}(x), & x\in \mathbb{R}^{n}. \\
\end{array}
\right.
\end{equation}
in the framework of  the $H^2(\mathbb{R}^n)$ space.
Throughout this subsection we consider $\alpha$ as a positive even integer. Before to establish our main results, we give some preliminary
facts. First, we recall the following conserved quantities of (\ref{FoSchedl}):
\begin{equation}\label{cc1}
M(u)=\Vert u\Vert^2_{L^2(\mathbb{R}^n)}
\end{equation}
\begin{equation}\label{cc2}
E_{\epsilon,\delta,\lambda}(u)=\delta\|\Delta u\|_{L^2}^2-\epsilon \|\nabla u\|^2_{L^2} +\frac{2\lambda}{\alpha+2}\|u\|^{\alpha+2}_{L^{\alpha+2}}, \ \text{if} \ A=\Delta^2
\end{equation}
\begin{equation}\label{cc3}
E_{\epsilon,\delta,\lambda}(u)=\delta\sum_{i=1}^d\|u_{x_ix_i}\|_{L^2}^2-\epsilon \|\nabla u\|^2_{L^2} +\tfrac{2\lambda}{\alpha+2}\|u\|^{\alpha+2}_{L^{\alpha+2}}, \ \text{if} \ A=\sum_{i=1}^d\partial_{x_ix_ix_ix_i}. \\
\end{equation}
According to the signs of the pair $(\delta,\lambda)$ and the parameter  $\epsilon$ that goes to zero, we have two cases
\begin{enumerate}
\item[$(i)$] Case 1: $\delta \lambda>0$ and $\epsilon\in \mathbb{R}.$
\item[$(ii)$] Case 2:  $\delta \lambda<0$ and $\epsilon\in \mathbb{R}.$
\end{enumerate}
Thus we have the next result.
\begin{proposition}\label{cant} Fix the parameters $\delta=\pm 1,\lambda=\pm 1$ and let $u_\epsilon\in C([-T,T];H^2(\mathbb{R}^n))$ the local solution of
(\ref{FoSchedl}) with initial data $u_0\in H^2(\mathbb{R}^n)$ and $A=\Delta^2$. Then,
\begin{itemize}
\item If $(\epsilon,\delta,\lambda)$ is as in Case 1 or
\item If $(\epsilon,\delta,\lambda)$ is as in Case 2,  $n\alpha <8,$  $\frac{n\alpha}{4(\alpha+2)}\leq 1,$  if $n\neq 2,4,$ and $0\leq \frac{n\alpha}{4(\alpha+2)}<1$ if $n= 2,4.$   
\end{itemize}
Then the following estimate holds
\begin{equation}
\Vert u_\epsilon(t)\Vert_{H^2(\mathbb{R}^n)}\leq C(\|u_0\|_{H^2}, \|u_0\|_{L^{\alpha+2}}).
\end{equation}
\end{proposition}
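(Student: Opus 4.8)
The plan is to read the bound off the two conserved quantities (\ref{cc1}) and (\ref{cc2}) of equation (\ref{FoSchedl}). For the $H^2$ solution $u_\epsilon$ these give $\|u_\epsilon(t)\|_{L^2}=\|u_0\|_{L^2}$ and $E_{\epsilon,\delta,\lambda}(u_\epsilon(t))=E_{\epsilon,\delta,\lambda}(u_0)$ for all $t\in[-T,T]$; their validity at the $H^2$ level is classical and is obtained by smoothing the initial data and passing to the limit, as in \cite{Fibich,GuoCui5}. Since on $\mathbb{R}^n$ the norm $\|u\|_{H^2}$ is equivalent to $\big(\|u\|_{L^2}^2+\|\Delta u\|_{L^2}^2\big)^{1/2}$ --- indeed $\|\nabla u\|_{L^2}^2\le\|u\|_{L^2}\|\Delta u\|_{L^2}\le\tfrac12\big(\|u\|_{L^2}^2+\|\Delta u\|_{L^2}^2\big)$ by interpolation in Fourier variables and Young's inequality --- it is enough to bound $\|\Delta u_\epsilon(t)\|_{L^2}$ uniformly in $t$, with a constant depending only on $\|u_0\|_{H^2}$ and $\|u_0\|_{L^{\alpha+2}}$ (we only use $|\epsilon|\le1$, which is the regime relevant to the vanishing limit). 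The same interpolation also gives, for $|\epsilon|\le1$,
\begin{equation*}
|\epsilon|\,\|\nabla u_\epsilon(t)\|_{L^2}^2\le\tfrac14\|\Delta u_\epsilon(t)\|_{L^2}^2+C\|u_0\|_{L^2}^2,
\end{equation*}
so the second-order term occurring in $E_{\epsilon,\delta,\lambda}$ is always harmless.

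In \textbf{Case 1} ($\delta\lambda>0$, hence $\delta=\lambda=\pm1$) I would solve the energy identity for $\|\Delta u_\epsilon(t)\|_{L^2}^2$: because $\delta$ and $\lambda$ have the same sign, the nonlinear term $\tfrac{2\lambda}{\alpha+2}\|u_\epsilon(t)\|_{L^{\alpha+2}}^{\alpha+2}$ then appears with the favourable sign and may be discarded, while the kinetic term is absorbed via the displayed inequality above. This leaves $\|\Delta u_\epsilon(t)\|_{L^2}^2\le C\big(|E_{\epsilon,\delta,\lambda}(u_0)|+\|u_0\|_{L^2}^2\big)$, and since $|E_{\epsilon,\delta,\lambda}(u_0)|\le\|\Delta u_0\|_{L^2}^2+\|\nabla u_0\|_{L^2}^2+\tfrac{2}{\alpha+2}\|u_0\|_{L^{\alpha+2}}^{\alpha+2}$ for $|\epsilon|\le1$, the claimed estimate follows with no restriction on $\alpha$.

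In \textbf{Case 2} ($\delta\lambda<0$) one may assume $\delta=1$, $\lambda=-1$ (the case $\delta=-1$, $\lambda=1$ being identical after replacing $E_{\epsilon,\delta,\lambda}$ by $-E_{\epsilon,\delta,\lambda}$ and $\epsilon$ by $-\epsilon$), so that
\begin{equation*}
\|\Delta u_\epsilon(t)\|_{L^2}^2=E_{\epsilon,\delta,\lambda}(u_0)+\epsilon\|\nabla u_\epsilon(t)\|_{L^2}^2+\tfrac{2}{\alpha+2}\|u_\epsilon(t)\|_{L^{\alpha+2}}^{\alpha+2}.
\end{equation*}
Now the nonlinear term carries the unfavourable sign, and the crux is to absorb it into the left-hand side. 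For this I would invoke the Gagliardo--Nirenberg inequality
\begin{equation*}
\|u_\epsilon(t)\|_{L^{\alpha+2}}^{\alpha+2}\le C\,\|\Delta u_\epsilon(t)\|_{L^2}^{\theta(\alpha+2)}\,\|u_\epsilon(t)\|_{L^2}^{(1-\theta)(\alpha+2)},\qquad\theta=\frac{n\alpha}{4(\alpha+2)},
\end{equation*}
which is valid precisely for $0\le\theta\le1$, with $\theta<1$ required in the borderline dimensions $n=2,4$ where the $\theta=1$ endpoint of this inequality fails; this is exactly why the hypotheses ask for $\tfrac{n\alpha}{4(\alpha+2)}\le1$ when $n\ne2,4$ and for $\tfrac{n\alpha}{4(\alpha+2)}<1$ when $n=2,4$. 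The remaining hypothesis $n\alpha<8$ is equivalent to $\theta(\alpha+2)=\tfrac{n\alpha}{4}<2$, which is precisely what makes Young's inequality applicable and yields $\tfrac{2}{\alpha+2}\|u_\epsilon(t)\|_{L^{\alpha+2}}^{\alpha+2}\le\tfrac14\|\Delta u_\epsilon(t)\|_{L^2}^2+C(\|u_0\|_{L^2})$. Combined with the bound on the kinetic term, this gives the self-improving inequality $\|\Delta u_\epsilon(t)\|_{L^2}^2\le\tfrac12\|\Delta u_\epsilon(t)\|_{L^2}^2+|E_{\epsilon,\delta,\lambda}(u_0)|+C(\|u_0\|_{L^2})$, whence $\|\Delta u_\epsilon(t)\|_{L^2}^2\le C\big(|E_{\epsilon,\delta,\lambda}(u_0)|+\|u_0\|_{L^2}^2\big)\le C(\|u_0\|_{H^2},\|u_0\|_{L^{\alpha+2}})$.

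Assembling the two cases with mass conservation and the equivalence $\|u_\epsilon(t)\|_{H^2}^2\le C\big(\|u_\epsilon(t)\|_{L^2}^2+\|\Delta u_\epsilon(t)\|_{L^2}^2\big)$ finishes the proof. I expect the only genuine difficulties to be, first, the rigorous justification of the conservation laws for merely $H^2$ data --- routine, via a smoothing/density argument or by appealing to \cite{Fibich,GuoCui5} --- and, second, which is the actual content of the statement, the observation that the subcritical threshold $n\alpha<8$ is exactly what allows the unfavourable-sign nonlinearity in Case 2 to be dominated by the biharmonic term through Gagliardo--Nirenberg and Young, the endpoint obstruction in dimensions $n=2,4$ accounting for the case split in the hypotheses.
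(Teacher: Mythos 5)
Your argument is correct and takes essentially the same route as the paper: conservation of mass and of the energy $E_{\epsilon,\delta,\lambda}$, discarding the favourable-sign nonlinearity in Case 1, and Gagliardo--Nirenberg plus Young (together with mass conservation) to absorb the unfavourable nonlinearity in Case 2 under $n\alpha<8$, with the same endpoint caveat in dimensions $n=2,4$. The only cosmetic difference is that you absorb $|\epsilon|\,\Vert\nabla u_\epsilon\Vert_{L^2}^2$ uniformly via $\Vert\nabla u\Vert_{L^2}^2\le\Vert u\Vert_{L^2}\Vert\Delta u\Vert_{L^2}$ and Young for $|\epsilon|\le 1$, whereas the paper distinguishes the subcases $\delta^{-1}\epsilon<0$ and $\delta^{-1}\epsilon>0$ and takes $|\epsilon|$ small to absorb that term into the $H^2$ norm; this is a minor streamlining, not a genuinely different approach.
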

\begin{proof} First we consider the Case 1. Using the conserved quantities of (\ref{FoSchedl}) given in (\ref{cc1})-(\ref{cc2}), we get
\begin{align}\label{FirIne}
\notag &\Vert u_\epsilon(t)\Vert^2_{L^2}+\Vert \Delta u_\epsilon(t)\Vert^2_{L^2} =M(u_0)+\delta^{-1}E_{\epsilon,\delta,\lambda}(u_0)+\delta^{-1}\epsilon\Vert \nabla
u_\epsilon\Vert^2_{L^2}\\
&-\frac{2\delta^{-1}\lambda}{\alpha+2}\|u_{\epsilon}\|^{\alpha+2}_{L^{\alpha+2}}\leq  M(u_0)+\delta^{-1}E_{\epsilon,\delta,\lambda}(u_0)+\delta^{-1}\epsilon\Vert \nabla u_\epsilon(t)\Vert^2_{L^2}.
\end{align}
At this point we have to consider two subcases. If $\delta^{-1}\epsilon<0,$ taking $0<\vert \epsilon\vert<\frac{1}{2},$ we arrived at
\[\Vert u_\epsilon(t)\Vert^2_{L^2}+\Vert \Delta u_\epsilon(t)\Vert^2_{L^2} \leq  M(u_0)+\delta^{-1}E_{\epsilon,\delta,\lambda}(u_0)\leq  M(u_0)+E_{-\frac{1}{2},1,\delta^{-1}\lambda}(u_0).\]
On the other hand, if  $\delta^{-1}\epsilon>0,$ we  have from (\ref{FirIne})
\begin{align*}
\|u_{\epsilon}(t)\|^2_{H^2}&\leq C(\Vert u_\epsilon(t)\Vert^2_{L^2}+\Vert \Delta u_\epsilon(t)\Vert^2_{L^2}) \\
&\leq  CM(u_0)+CE_{0,1,\delta^{-1}\lambda}(u_0)+\delta^{-1}\epsilon C\Vert u_\epsilon(t)\Vert^2_{H^2}.
\end{align*}
Again, consider $0<\vert \epsilon\vert<\frac{1}{2C}$ to arrive at
 \[\|u_{\epsilon}(t)\|^2_{H^2}\apprle M(u_0)+E_{0,1,\delta^{-1}\lambda}(u_0).\]
In any subcase we obtain the desired result.\\

Now, we consider the Case 2. Consider the restrictions $n\alpha<8,$ $0\leq \frac{n\alpha}{4(\alpha+2)}\leq 1$ if $n\neq 2,4,$ and $0\leq \frac{n\alpha}{4(\alpha+2)}<1$ if $n= 2,4.$ Thus, by applying the Douglas-Niremberg and Young inequalities we get
\begin{align}
&\notag \Vert u_\epsilon(t)\Vert^2_{L^2}+\Vert \Delta u_\epsilon(t)\Vert^2_{L^2} =M(u_0)+\delta^{-1}E_{\epsilon,\delta,\lambda}(u_0)+\delta^{-1}\epsilon\Vert \nabla
u_\epsilon(t)\Vert^2_{L^2}\\ \notag
&-\frac{2\delta^{-1}\lambda}{\alpha+2}\|u_{\epsilon}(t)\|^{\alpha+2}_{L^{\alpha+2}} \leq  M(u_0)+\delta^{-1}E_{\epsilon,\delta,\lambda}(u_0)+\delta^{-1}\epsilon\Vert \nabla u_\epsilon(t)\Vert^2_{L^2}\\ \notag
&+C_1\|u_\epsilon(t)\|^{\frac{n\alpha}{4}}_{H^2}\|u_\epsilon(t)\|^{\alpha+2-\frac{n\alpha}{4}}_{L^2}=M(u_0)+\delta^{-1}E_{\epsilon,\delta,\lambda}(u_0)+\delta^{-1}\epsilon\Vert \nabla u_\epsilon(t)\Vert^2_{L^2}\\ \notag
&+C_1\|u_\epsilon(t)\|^{\frac{n\alpha}{4}}_{H^2}\|u_0\|^{\alpha+2-\frac{n\alpha}{4}}_{L^2} \leq M(u_0)+\delta^{-1}E_{\epsilon,\delta,\lambda}(u_0)+\delta^{-1}\epsilon\Vert \nabla u_\epsilon(t)\Vert^2_{L^2}\\ \label{EstNor1}
&+C_1\mu_0 \|u_\epsilon(t)\|^2_{H^2} +C(\mu_0) \|u_0\|^\kappa_{L^2}
\end{align}
with $\kappa=\frac{8(\alpha+2)-8n\alpha}{8-n\alpha}.$ Taking $0<\mu_0 <\frac{1}{2C_1},$ we obtain from (\ref{EstNor1}) that
\begin{equation}\label{EstNor2}
\|u_\epsilon(t)\|^2_{H^2}\apprle  M(u_0)+\delta^{-1}E_{\epsilon,\delta,\lambda}(u_0)+\delta^{-1}\epsilon\Vert \nabla u_\epsilon(t)\Vert^2_{L^2}+C(\|u_0\|_{L^2}).
\end{equation}
Again, we have two subcases. If $\delta^{-1}\epsilon<0,$ it is easy to see that for $0<|\epsilon|<\frac{1}{2}$
\begin{equation}\label{EstNor3}
\|u_\epsilon(t)\|^2_{H^2}\apprle  M(u_0)+E_{-\frac{1}{2},1, \delta^{-1}\lambda}(u_0)+C(\mu_0, \|u_0\|_{L^2}).
\end{equation}
Finally, if $\delta^{-1}\epsilon>0,$ we use that $\delta^{-1}\epsilon\Vert \nabla u_\epsilon(t)\Vert^2_{L^2}\leq \frac{1}{2}\Vert  u_\epsilon(t)\Vert^2_{H^2}$ for $0<|\epsilon|<\frac{1}{2}$ in (\ref{EstNor2}) to obtain again inequality (\ref{EstNor3}).
\end{proof}

Now, we are in position to establish our main results of this section
\begin{theorem}\label{TheCon1} Consider  $u_{\epsilon}$ and $u$  in the class $C([-T,T];H^2(\mathbb{R}^n))$, the solutions of (\ref{FoSchedl}) and (\ref{FoSche=0}) respectively,  with common initial data  $u_0\in H^2(\mathbb{R}^n)$ and $A=\Delta^2.$ Here  $[-T,T]$ is the common interval of local existence for  $u_{\epsilon}$ and $u$. Suppose $n<4,$ if $\delta \lambda<0$ assume that  $n\alpha <8,$  $\frac{n\alpha}{4(\alpha+2)}\leq 1,$  if $n\neq 2,$ and $0\leq \frac{n\alpha}{4(\alpha+2)}<1$ if $n= 2.$   Then,
\[\lim_{\epsilon\rightarrow 0} \|u_{\epsilon}(t)-u(t)\|_{H^2}=0,\]
for all $t\in [-T,T].$
\end{theorem}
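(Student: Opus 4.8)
The plan is to estimate the difference $w_{\epsilon}:=u_{\epsilon}-u$ in $C([-T,T];H^{2}(\mathbb{R}^{n}))$ by a Gronwall argument, the decisive point being that $H^{2}(\mathbb{R}^{n})$ is a Banach algebra exactly when $n<4$. Subtracting the Duhamel representations of $u_{\epsilon}$ and $u$ (cf. (\ref{IntEque=0})--(\ref{IntEque=1}), now with nonlinearity $f(|u|)u=\lambda|u|^{\alpha}u$), one writes $w_{\epsilon}=L^{1}_{\epsilon}+N_{\epsilon}+L^{2}_{\epsilon}$, where $L^{1}_{\epsilon}(t)=[G_{\epsilon,\delta}(t)-G_{0,\delta}(t)]u_{0}$, $N_{\epsilon}(t)=i\lambda\int_{0}^{t}G_{\epsilon,\delta}(t-\tau)\bigl(|u_{\epsilon}|^{\alpha}u_{\epsilon}-|u|^{\alpha}u\bigr)(\tau)\,d\tau$, and $L^{2}_{\epsilon}(t)=i\lambda\int_{0}^{t}[G_{\epsilon,\delta}(t-\tau)-G_{0,\delta}(t-\tau)]\bigl(|u|^{\alpha}u\bigr)(\tau)\,d\tau$. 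The three ingredients I would use are: (i) $G_{\epsilon,\delta}(t)$ has the unimodular Fourier symbol $e^{-it(\epsilon|\xi|^{2}-\delta|\xi|^{4})}$, hence is an isometry on every $H^{s}(\mathbb{R}^{n})$; (ii) since $n<4$ and $\alpha$ is an even positive integer, $z\mapsto|z|^{\alpha}z$ is a polynomial in $(z,\bar z)$, so the $H^{2}$ algebra property gives $\| |u_{\epsilon}|^{\alpha}u_{\epsilon}-|u|^{\alpha}u \|_{H^{2}}\le C(\|u_{\epsilon}\|_{H^{2}}^{\alpha}+\|u\|_{H^{2}}^{\alpha})\|w_{\epsilon}\|_{H^{2}}$ and also $|u|^{\alpha}u\in C([-T,T];H^{2})$; (iii) Proposition \ref{cant}, applied to $u_{\epsilon}$ and, with $\epsilon=0$, to $u$, furnishes a constant $M=M(\|u_{0}\|_{H^{2}},\|u_{0}\|_{L^{\alpha+2}})$ with $\sup_{|t|\le T}(\|u_{\epsilon}(t)\|_{H^{2}}+\|u(t)\|_{H^{2}})\le M$ uniformly in $\epsilon$ for $|\epsilon|$ small (in particular at $\epsilon=0$); the additional restrictions on $n,\alpha$ in the case $\delta\lambda<0$ are exactly those required to invoke Proposition \ref{cant}.

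Granting (i)--(iii), one obtains $\|N_{\epsilon}(t)\|_{H^{2}}\le CM^{\alpha}\int_{0}^{|t|}\|w_{\epsilon}(\tau)\|_{H^{2}}\,d\tau$. For the two linear remainders I would pass to the Fourier side: the Fourier transform of $[G_{\epsilon,\delta}(s)-G_{0,\delta}(s)]g$ equals $e^{is\delta|\xi|^{4}}(e^{-is\epsilon|\xi|^{2}}-1)\hat g(\xi)$; splitting the frequency integral at $|\xi|=R$ and using $|e^{-is\epsilon|\xi|^{2}}-1|\le 2$ on $|\xi|>R$ (where the tail of $g\in H^{2}$ is small uniformly in $s,\epsilon$) together with $|e^{-is\epsilon|\xi|^{2}}-1|\le T|\epsilon|R^{2}$ on $|\xi|\le R$, one gets $\sup_{|s|\le T}\|[G_{\epsilon,\delta}(s)-G_{0,\delta}(s)]g\|_{H^{2}}\to 0$ as $\epsilon\to0$ for each fixed $g\in H^{2}$. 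Taking $g=u_{0}$ controls $\sup_{|t|\le T}\|L^{1}_{\epsilon}(t)\|_{H^{2}}$; for $L^{2}_{\epsilon}$ one applies this pointwise in $\tau$ with $g=(|u|^{\alpha}u)(\tau)$ and then invokes dominated convergence in $\tau$ (the integrand being bounded by $2\sup_{|\tau|\le T}\|(|u|^{\alpha}u)(\tau)\|_{H^{2}}<\infty$, by (ii)), so that $\sup_{|t|\le T}\|L^{2}_{\epsilon}(t)\|_{H^{2}}\to 0$.

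Setting $\eta(\epsilon):=\sup_{|t|\le T}(\|L^{1}_{\epsilon}(t)\|_{H^{2}}+\|L^{2}_{\epsilon}(t)\|_{H^{2}})$, which tends to $0$ as $\epsilon\to0$, the preceding estimates give $\|w_{\epsilon}(t)\|_{H^{2}}\le\eta(\epsilon)+CM^{\alpha}\int_{0}^{|t|}\|w_{\epsilon}(\tau)\|_{H^{2}}\,d\tau$ for all $|t|\le T$, and Gronwall's inequality then yields $\sup_{|t|\le T}\|u_{\epsilon}(t)-u(t)\|_{H^{2}}\le\eta(\epsilon)\,e^{CM^{\alpha}T}\to0$ as $\epsilon\to0$, which is the assertion. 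The main obstacles I anticipate are the two steps where $n<4$ is genuinely needed — the uniform-in-$(t,\epsilon)$ control of the nonlinearity in $H^{2}$, and the persistence $|u|^{\alpha}u\in C([-T,T];H^{2})$ used to handle $L^{2}_{\epsilon}$ — together with checking carefully that the a priori bound of Proposition \ref{cant} is genuinely uniform in $\epsilon$ (and remains valid at $\epsilon=0$) under the stated sign and dimension hypotheses.
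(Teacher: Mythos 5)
Your proposal is correct and follows essentially the same route as the paper's proof: the same Duhamel-difference decomposition into $L^{1}_{\epsilon}$, $N_{\epsilon}$, $L^{2}_{\epsilon}$, unitarity of $G_{\epsilon,\delta}(t)$ on $H^{2}$, the $H^{2}$ algebra property for $n<4$ combined with the uniform bounds of Proposition \ref{cant} (also at $\epsilon=0$), convergence of the symbol difference $e^{it\delta|\xi|^{4}}(e^{-it\epsilon|\xi|^{2}}-1)$ on the Fourier side, and a Gronwall argument. The only cosmetic difference is that you handle the linear remainders by a frequency truncation at $|\xi|=R$, whereas the paper invokes dominated convergence directly; these are equivalent.
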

\begin{remark}(Anisotropic dispersion). A version of Theorem \ref{TheCon1} for the anisotropic case, i.e.,
$A=\sum_{i=1}^d\partial_{x_ix_ix_ix_i},$ by replacing the norm
convergence in $H^2$ by the natural norm
$H^(\mathbb{R}^d)H^2(\mathbb{R}^{n-d}),$ is not clear. In fact, we
are not able to bound $\Vert \nabla u_\epsilon\Vert_{L^2}$  or $ \Vert u_\epsilon\Vert^2_{H^1}+\sum_{i=1}^d\Vert  u_{\epsilon_{x_ix_i}}\Vert^2_{L^2}$ in terms of the conserved quantities associated  to (\ref{FoSchedl}) and  independently of $\epsilon.$ 
\end{remark}
\begin{proof} As usual, the mild  solutions associated to (\ref{FoSche=0})
satisfy the integral equation
\begin{equation}\label{IntEque=0}
u(x,t)=G_{0,\delta}(t)u_0(x)+i\int_0^tG_{0,\delta}(t-\tau)f(|u(x,\tau)|)u(x,\tau)d\tau,
\end{equation}
where $G_{0,\delta}$ is define as $G_{\epsilon, \delta}$ given in (\ref{DefGe}), but
with $\epsilon=0.$
Making the difference between the integral
equations (\ref{IntEqu}) and (\ref{IntEque=0}) we get that
\begin{align*}
&\|u_{\epsilon}(t)-u(t)\|_{H^2}\leq \left\|\left[G_{\epsilon,\delta}(t)-G_{0,\delta}(t)\right]u_0\right\|_{H^2}\\
&+\left\|\int_0^tG_{\epsilon,\delta}(t-\tau)|u_{\epsilon}(\tau)|^\alpha u_{\epsilon}(\tau)d\tau-\int_0^tG_{0,\delta}(t-\tau)|u(\tau)|^\alpha u(\tau)d\tau\right\|_{H^2}\leq \\
&\int_0^t\left\|G_{\epsilon,\delta}(t-\tau)\left[|u_{\epsilon}(\tau)|^\alpha u_{\epsilon}(\tau)-|u(\tau)|^\alpha u(\tau)\right]\right\|_{H^2}d\tau+ \left\|\left[G_{\epsilon,\delta}(t)-G_{0,\delta}(t)\right]u_0\right\|_{H^2}\\
&+\int_0^t\left\|[G_{\epsilon,\delta}(t-\tau)-G_{0,\delta}(t-\tau)]|u(\tau)|^\alpha
u(\tau)\right\|_{H^2}d\tau
\end{align*}
Since $G_{\epsilon,\delta}(t)$ is a unitary group on $H^2,$ from last inequality we obtain
\begin{align}\label{IneDif1}
\notag &\|u_{\epsilon}(t)-u(t)\|_{H^2}\leq \int_0^t \left\|\left[|u_{\epsilon}(\tau)|^\alpha u_{\epsilon}(\tau)-|u(\tau)|^{\alpha}u(\tau)\right]\right\|_{H^2}d\tau+\\
&\notag\left\|\left[G_{\epsilon,\delta}(t)-G_{0,\delta}(t)\right]u_0\right\|_{H^2}+\int_0^t\left\|[G_{\epsilon,\delta}(t-\tau)-G_{0,\delta}(t-\tau)]|u(\tau)|^\alpha u(\tau)\right\|_{H^2}d\tau\\
\notag
&\leq \int_0^t \left\||u_{\epsilon}(\tau)-u(\tau)|(|u_{\epsilon}(\tau)|^{\alpha}+|u(\tau)|^{\alpha})\right\|_{H^2}d\tau+ \left\|\left[G_{\epsilon,\delta}(t)-G_{0,\delta}(t)\right]u_0\right\|_{H^2}\\
&+\int_0^t\left\|[G_{\epsilon,\delta}(t-\tau)-G_{0,\delta}(t-\tau)]|u(\tau)|^\alpha
u(\tau)\right\|_{H^2}d\tau.
\end{align}
Then,  from (\ref{IneDif1}) and Proposition \ref{cant} we have
 \begin{align}
\notag\|u_{\epsilon}(t)-u(t)\|_{H^2}&\leq C\int_0^t \left\|u_{\epsilon}(\tau)-u(\tau)\right\|_{H^2}d\tau+ \left\|\left[G_{\epsilon,\delta}(t)-G_{0,\delta}(t)\right]u_0\right\|_{H^2}\\
&+\int_0^t\left\|[G_{\epsilon,\delta}(t-\tau)-G_{0,\delta}(t-\tau)]|u(\tau)|^\alpha
u(\tau)\right\|_{H^2}d\tau.
\end{align}
From Gronwall's inequality we arrived at
\[\|u_{\epsilon}(t)-u(t)\|_{H^2}\leq \Psi_{\epsilon, \delta}(t)+C\int_0^t\Psi_{\epsilon, \delta}(\tau)e^{C(t-\tau)}d\tau,\]
where
\begin{align*}
\Psi_{\epsilon, \delta}(t)= &\left\|\left[G_{\epsilon,\delta}(t)-G_{0,\delta}(t)\right]u_0\right\|_{H^2}\\
&\ \ \ \ \ +\int_0^t\left\|[G_{\epsilon,\delta}(t-\tau)-G_{0,\delta}(t-\tau)]| u(\tau)|^\alpha u(\tau)\right\|_{H^2}d\tau.
\end{align*}
Notice that being $\alpha$ a positive integer we have
\begin{align*}
\Psi_{\epsilon, \delta}(t)&\leq \|u_0\|_{H^2}+\int_0^t\| |u(\tau)|^\alpha u(\tau)\|_{H^2}d\tau\leq \|u_0\|_{H^2}+\int_0^t\|u(\tau)\|^{\alpha+1}_{H^2}d\tau\\
&\leq  \|u_0\|_{H^2}+t\|u_0\|^{\alpha+1}_{H^2}.
\end{align*}
Thus $|\Psi_{\epsilon, \delta}(\tau)e^{C(t-\tau)}|\apprle e^{C(t-\tau)}.$ Since $e^{C(t-\tau)}\in L^1(0,T),$ in order to obtain our result we just have to show that $\Psi_{\epsilon, \delta}(t)\rightarrow 0$ as $\epsilon\rightarrow 0,$ for any $t\in[0,T].$ First, observe that
\[\left\|\left[G_{\epsilon,\delta}(t)-G_{0,\delta}(t)\right]u_0\right\|^2_{H^2}=\int_{\mathbb{R}^n}\langle \xi\rangle^4|e^{-it\epsilon|\xi|^2}-1|^2|\widehat{u_0}(\xi)|^2d\xi.\]
Since $\langle
\xi\rangle^4|e^{-it\epsilon|\xi|^2}-1|^2|\widehat{u_0}(\xi)|^2\apprle
\langle \xi\rangle^4|\widehat{u_0}(\xi)|^2\in L^1(\mathbb{R}^n)$ and
$\langle
\xi\rangle^4|e^{-it\epsilon|\xi|^2}-1|^2|\widehat{u_0}(\xi)|^2\rightarrow
0,$ as $\epsilon\rightarrow 0,$ a.e. on $\mathbb{R}^n,$ by the
Lebesgue's dominated convergence theorem  we have
\[\lim_{\epsilon\rightarrow 0} \left\|\left[G_{\epsilon,\delta}(t)-G_{0,\delta}(t)\right]u_0\right\|_{H^2}=0.\]
From Proposition \ref{cant} we get
\begin{align*}
\|[G_{\epsilon,\delta}(t-\tau)-G_{0,\delta}(t-\tau)]|u(\tau)|^\alpha u(\tau)\|_{H^2}&\leq \||u(\tau)|^\alpha u(\tau)\|_{H^2}\leq \|u(\tau)\|^{\alpha+1}_{H^2}\\
& \apprle [C(\|u_0\|_{H^2}, \|u_0\|_{L^{\alpha+2}})]^{\alpha+1}.
\end{align*}
Moreover notice that
$\|[G_{\epsilon,\delta}(t-\tau)-G_{0,\delta}(t-\tau)]|u(\tau)|^\alpha
u(\tau)\|_{H^2}\rightarrow 0,$ as $\epsilon\rightarrow 0;$ then we
arrived at
\[\lim_{\epsilon\rightarrow 0}\int_0^t\left\|[G_{\epsilon,\delta}(t-\tau)-G_{0,\delta}(t-\tau)]|u(\tau)|^\alpha u(\tau)\right\|_{H^2}d\tau=0,\]
which finishes the proof of the theorem.
\end{proof}


\end{document}